\newtheorem{thm}{Theorem}[section]
\newtheorem{lem}[thm]{Lemma}
\newtheorem{prop}[thm]{Proposition}
\newtheorem{cor}[thm]{Corollary}
\theoremstyle{definition}
\newtheorem{dfn}[thm]{Definition}
\newtheorem{ques}[thm]{Question}
\newtheorem{rmk}[thm]{Remark}
\newtheorem{eg}[thm]{Example}
\theoremstyle{remark}
\newtheorem*{ac}{Acknowledgments}
\newtheorem*{conv}{Convention}
\numberwithin{equation}{thm}
\def\add{\operatorname{add}}
\def\ann{\operatorname{ann}}
\def\c{\operatorname{C}}
\def\ca{\operatorname{ca}}
\def\cm{\operatorname{CM}}
\def\depth{\operatorname{depth}}
\def\dim{\operatorname{dim}}
\def\ext{\operatorname{ext}}
\def\Ext{\operatorname{Ext}}
\def\End{\operatorname{End}}
\def\lEnd{\underline{\operatorname{End}}}
\def\lmod{\underline{\operatorname{mod}}}
\def\fl{\operatorname{fl}}
\def\grade{\operatorname{grade}}
\def\gll{g\ell\ell}
\def\Hom{\operatorname{Hom}}
\def\inf{\operatorname{inf}}
\def\image{\operatorname{Im}}
\def\m{\mathfrak{m}}
\def\n{\mathfrak{n}}
\def\mod{\operatorname{mod}}
\def\nf{\mathrm{NF}}
\def\p{\mathfrak{p}}
\def\Rfd{\operatorname{Rfd}}
\def\sa{\operatorname{\underline{ann}}}
\def\spec{\operatorname{Spec}}
\def\sup{\operatorname{sup}}
\def\Tr{\operatorname{Tr}}
\def\V{\mathrm{V}}
\def\X{\mathcal{X}}
\def\xx{\bm{x}}
\def\Y{\mathcal{Y}}
\begin{document}
\allowdisplaybreaks
\title{Structure of modules stably annihilated by a fixed ideal}
\author{Yuki Mifune}
\address{Graduate School of Mathematics, Nagoya University, Furocho, Chikusaku, Nagoya 464-8602, Japan}
\email{yuki.mifune.c9@math.nagoya-u.ac.jp}
\thanks{2020 {\em Mathematics Subject Classification.} 13C14, 13C60}
\thanks{{\em Key words and phrases.} cohomology annihilator, finite representation type, generalized Loewy length, maximal Cohen--Macaulay module, minimal multiplicity, stable annihilator, syzygy category.}
\thanks{The author was partly supported by Grant-in-Aid for JSPS Fellows Grant Number 25KJ1386.}
\begin{abstract}
Let $R$ be a commutative noetherian ring, and denote by $\operatorname{mod} R$ the category of finitely generated $R$-modules. 
In this paper, for an ideal $I$ of $R$, we introduce the full subcategory $\operatorname{mod}_{I}(R)$ of $\operatorname{mod} R$ consisting of modules whose stable annihilators contain $I$, and we investigate its structure.
As an application, we explore the syzygy category of maximal Cohen--Macaulay $R$-modules, extending a theorem of Dey and Liu from the Gorenstein case to the Cohen--Macaulay case.
\end{abstract}
\maketitle
%\tableofcontents
%%%%%%%%%%%%%%%%%%%%
\section{Introduction}
Let $R$ be a commutative noetherian ring, and let $\mod R$ stand for the category of finitely generated $R$-modules.
We denote by $\sa_{R}(M)$ the {\em stable annihilator} of a finitely generated $R$-module $M$, which is by definition the set of elements $a \in R$ such that the multiplication map $M\xrightarrow{a}M$ factors through a projective $R$-module.
This notion has been studied in several works; see \cite{DKT2021,DeyLiu,E2020,Esenetepe,Kimura} for instance.

Given an ideal $I$ of $R$, we denote by $\mod_{I}(R)$ the full subcategory of $\mod R$ consisting of modules $M$ such that $\sa_{R}(M)$ contains $I$.
Following \cite{T2023}, we let $\c(R)$ stand for the full subcategory of $\mod R$ consisting of modules $M$ satisfying $\depth M_{\p}\geq\depth R_{\p}$ for every prime ideal $\p$ of $R$.
When $R$ is Cohen--Macaulay, $\c(R)$ coincides with the category of maximal Cohen--Macaulay $R$-modules, denoted by $\cm(R)$.
We put $\c_{I}(R)=\c(R)\cap\mod_{I}(R)$ and $\cm_{I}(R)=\cm(R)\cap\mod_{I}(R)$.
In this paper, we investigate the structural properties of the subcategories $\mod_{I}(R)$ and $\c_{I}(R)$.
Their basic closure properties are summarized in the following theorem.
%%%%%%%%%%%%%%%%%%%%
\begin{thm}[Lemma \ref{filt} and Propositions \ref{extofmodI} and \ref{extofCI}]\label{lem_int}
Let $R$ be a commutative noetherian ring and $I$ an ideal of $R$.
Then the following hold.
\begin{enumerate}[\rm(1)]
\item
The subcategory $\mod_{I}(R)$ contains $R$ and $R/I$, and is closed under finite direct sums, direct summands, transposes, $R$-duals, and syzygies.
Moreover, its extension closure coincides with the full subcategory of $\mod R$ consisting of modules that are locally free on $\spec R\setminus\V(I)$.
\item
The subcategory $\c_{I}(R)$ contains $R$, and is closed under finite direct sums, direct summands, and syzygies.
If $R$ has finite Krull dimension $d$, then $\c_{I}(R)$ is also closed under $d$-th syzygies of transposes and of $R$-duals.
Furthermore, if $(R,\m)$ is local and either $R$ is Gorenstein or $I$ is $\m$-primary, then its extension closure coincides with the full subcategory of $\c(R)$ consisting of modules that are locally free on $\spec R\setminus\V(I)$.
\end{enumerate}
\end{thm}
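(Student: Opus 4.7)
The plan is to separate two kinds of arguments: the closure properties of the subcategories, which follow formally from manipulations of the stable annihilator, and the identification of the extension closures, which is the substantive content.

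For the closure properties, each item can be verified by direct inspection. Containment of $R$ is clear from $\sa_R(R)=R$, while $R/I\in\mod_I(R)$ because any $a\in I$ gives a zero map on $R/I$, which factors through $0$. Closure under finite direct sums and direct summands comes from the identity $\sa_R(M\oplus N)=\sa_R(M)\cap\sa_R(N)$. For syzygies, a diagram chase on $0\to\Omega M\to P\to M\to 0$ transfers a factorization of $a\cdot 1_M$ through a projective to one of $a\cdot 1_{\Omega M}$. Transposes and $R$-duals descend to additive endofunctors of the stable category $\lmod R$, and since $\sa_R(M)$ is precisely the annihilator of $\id_M$ in $\lEnd_R(M)$, any such endofunctor preserves inclusions of stable annihilators. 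For part (2), the same arguments handle direct sums, summands and syzygies; since $\Tr M$ and $M^*$ need not lie in $\c(R)$, a $t$-th syzygy pushes them back inside $\c(R)$ while preserving the stable annihilator by the syzygy case already established.

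The forward inclusion of the extension closure is elementary: if $a\in\sa_R(M)$ and $\p\in\spec R$ satisfies $a\notin\p$, then localizing the factorization of $a\cdot 1_M$ shows $M_\p$ is a direct summand of a free $R_\p$-module, hence free. So every $M\in\mod_I(R)$ is locally free on $\spec R\setminus\V(I)$, and this property survives extensions (any extension of two free modules over a local ring is free, since the quotient being free forces the sequence to split). For the reverse inclusion, given $M$ locally free on $\spec R\setminus\V(I)$, the plan is to construct a finite filtration of $M$ whose successive quotients belong to $\mod_I(R)$. The underlying mechanism is that some power of $I$ annihilates the stable endomorphism ring $\lEnd_R(M)$ (whose support lies in $\V(I)$), which furnishes elements of $I^n$ factoring identities through projectives on suitable sub- and quotient-modules; iterating yields the filtration. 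For the $\c_I(R)$ case, this construction must remain inside $\c(R)$: when $R$ is Gorenstein this is automatic because $\cm(R)$ is closed under the relevant operations, while the grade-$t$ hypothesis ensures $R/I$ has projective dimension $t$, so sufficiently high syzygies of $R/I$ lie in $\c(R)$ and the filtration can be built out of these.

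The hardest step will be the reverse inclusion for the extension closure, especially the $\c_I(R)$ case under the grade-$t$ hypothesis, where the classical MCM machinery is unavailable and the depth constraint must be coordinated with the cohomology-annihilator filtration. Arranging the filtration so that the subquotients lie in $\mod_I(R)$ while every term in the filtration remains in $\c(R)$ is the technical crux.
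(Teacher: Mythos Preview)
Your handling of the closure properties is sound and matches the paper. The gaps are all in the reverse inclusions for the extension closures.

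For $\mod_I(R)$: your filtration plan is not a proof. Knowing $I^n\subseteq\sa_R(M)$ gives no control over stable annihilators of sub- or quotient-modules of $M$; there is no evident way to peel off a layer lying in $\mod_I(R)$ and induct on $n$. The paper avoids filtering $M$ entirely. It observes that for every $\p\in\V(I)$ one has $R/\p\in\mod_\p(R)\subseteq\mod_I(R)$, hence all $\Omega_R^i(R/\p)$ lie in $\mod_I(R)$ as well, and then invokes a theorem of Bahlekeh--Hakimian--Salarian--Takahashi stating that these modules together with $R$ generate $\mod_{\V(I)}(R)$ under extensions and summands.

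For $\c_I(R)$ under the grade-$t$ hypothesis: your assertion that $\grade I=t$ forces $\pd_R(R/I)=t$ is false. Grade $t$ only guarantees a length-$t$ regular sequence inside $I$, not that $I$ is generated by one, and $R$ is not assumed regular, so $\pd_R(R/I)$ is typically infinite. The paper's argument is completely different and does not route through $R/I$. Given $M\in\c_{\V(I)}(R)$, one chooses a regular sequence of length $t$ inside $J=\sa_R(M)\supseteq I$ and extends it to a generating set $\bm{x}=x_1,\dots,x_m$ of $J$. A Koszul-complex construction of Takahashi exhibits $M$ as a direct summand of a module $E_m$ built by iterated extensions from the Koszul homologies $\mathrm{H}_i(\bm{x},M)$ and their syzygies. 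Since $M\in\c(R)$ forces $\grade(J,M)\geq t$, grade sensitivity kills the top $t+1$ Koszul homologies, so $E_m\cong\Omega_R^t E_{m-t}$, and this $t$-th syzygy pushes the remaining extension tower into $\c_I(R)$.

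The Gorenstein case is also less automatic than you suggest: the paper uses that $\Omega_R$ is an autoequivalence on $\lcm(R)$, hence $\cm_I(R)$ is closed under \emph{cosyzygies}, which is what allows one to undo a $d$-th syzygy in the BHST generating set while remaining inside $\cm_I(R)$.
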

%%%%%%%%%%%%%%%%%%%%
When $R$ is a Cohen--Macaulay local ring and $I$ is the maximal ideal $\m$ of $R$, we obtain the following result, which is the main theorem of this paper.
Recall that a full subcategory of $\mod R$ is said to be {\em of finite type} if there exist only finitely many isomorphism classes of indecomposable modules in it.
\begin{thm}[Theorem \ref{mainthm}]\label{mainthm_int}
Let $(R,\m,k)$ be a singular Cohen--Macaulay local ring of dimension $d$.
Then one has
\[
\Omega(\cm_{\m}(R))\subseteq\left|\Omega^{d}k\oplus R\right|_{\gll(R)-1}.
\]
Moreover, if $\gll(R)=2$ or $R$ has minimal multiplicity, then the equality 
\[
\Omega(\cm_{\m}(R))=\add(\Omega^{d}k\oplus R)
\]
holds.
Hence, the category $\Omega(\cm_{\m}(R))$ is of finite type in this case.
\end{thm}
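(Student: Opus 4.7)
The plan is to exploit the definition of generalized Loewy length by choosing a system of parameters $\xx = x_1,\dots,x_d$ in $\m$ with $\m^{n}\subseteq(\xx)$, where $n=\gll(R)$. For any $M\in\cm_\m(R)$, the sequence $\xx$ is $M$-regular (since $M$ is maximal Cohen--Macaulay over the Cohen--Macaulay ring $R$) and is contained in $\sa_R(M)$ (since $\xx\subseteq\m\subseteq\sa_R(M)$); both facts will be used repeatedly.

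First I would consider the finite-length quotient $M/\xx M$, which is annihilated by $\m^n$. Its $\m$-adic filtration $M/\xx M\supseteq\m(M/\xx M)\supseteq\cdots\supseteq\m^{n-1}(M/\xx M)\supseteq\m^{n}(M/\xx M)=0$ has $k$-vector space subquotients, so $M/\xx M\in|k|_{n-1}$. The Horseshoe lemma implies that $\Omega$ sends the $s$-fold extension closure of $X$ into that of $\Omega X\oplus R$ (the summand $R$ accommodating the free modules appearing in syzygies built via non-minimal resolutions), so iterating $d$ times yields $\Omega^{d}(M/\xx M)\in|\Omega^{d}k\oplus R|_{n-1}$.

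The main technical step is then to show that $\Omega M$ appears as a direct summand of $\Omega^{d}(M/\xx M)$ modulo free summands. Since $\xx$ is $M$-regular, the Koszul complex $K_\bullet(\xx)\otimes_R M$ is a length-$d$ resolution of $M/\xx M$ by direct sums of copies of $M$, with differentials built from multiplication by $\pm x_i$ on $M$. Because each $x_i\in\sa_R(M)$, these differentials are stably null, and the short exact sequences extracted from the Koszul complex---such as $0\to\xx M\to M\to M/\xx M\to 0$ and the Koszul relation sequences---split stably. Iterating this observation through the syzygy functor produces $\Omega M$ as a stable direct summand of $\Omega^{d}(M/\xx M)$; closure of $|\Omega^{d}k\oplus R|_{n-1}$ under direct summands then gives $\Omega M\in|\Omega^{d}k\oplus R|_{n-1}$.

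I expect the hardest part to be this stable-splitting analysis: while each individual Koszul differential being stably null is easy, translating this into an explicit stable-summand decomposition of $\Omega^{d}(M/\xx M)$ requires inductive bookkeeping and the vanishing of certain obstructions in $\Ext^{1}$. For the equality $\Omega(\cm_\m(R))=\add(\Omega^{d}k\oplus R)$ under $\gll(R)\leq 2$ or minimal multiplicity, the filtration has length at most two and the condition $\m^{2}=\xx\m$ (which holds under minimal multiplicity and tightly constrains extensions when $\gll(R)\leq 2$) forces the relevant extensions to split outright. The reverse inclusion $\add(\Omega^{d}k\oplus R)\subseteq\Omega(\cm_\m(R))$ reduces to exhibiting $\Omega^{d}k$ as a direct summand of $\Omega M$ for some $M\in\cm_\m(R)$, typically via an MCM approximation of $\Omega^{d-1}k$ or a related direct construction.
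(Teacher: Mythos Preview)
Your overall strategy is the paper's, but two off-by-one slips conceal a real gap. The filtration you display for $M/\xx M$ has $n$ subquotients, so it yields only $M/\xx M\in|k|_{n}$, not $|k|_{n-1}$; and the Koszul/stable-splitting argument produces $M$ (not $\Omega M$) as a direct summand of $\Omega^{d}(M/\xx M)$. With both statements corrected, your chain of reasoning proves only $M\in|\Omega^{d}k\oplus R|_{\gll(R)}$, which is one extension too many and concerns $M$ rather than $\Omega M$. The paper earns the missing ``$-1$'' by a change-of-rings step you do not have: one first takes a syzygy over the artinian quotient, noting that $\Omega_{R/(\xx)}(M/\xx M)$ lies inside $(\m/(\xx))\cdot(R/(\xx))^{s}$ and is therefore killed by $\m^{n-1}$ (Lemma~\ref{lem_syz}), and then invokes the comparison $\Omega_{R}^{d+1}(M/\xx M)\oplus R^{a}\cong\Omega_{R}^{d}\bigl(\Omega_{R/(\xx)}(M/\xx M)\bigr)$ from \cite[Lemma~4.2]{Nasseh Takahashi}. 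Combined with the fact that $M$ is a summand of $\Omega_{R}^{d}(M/\xx M)$, this gives $\Omega M\in|\Omega^{d}k\oplus R|_{n-1}$.

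Two further gaps. Your handling of minimal multiplicity presumes a system of parameters with $\m^{2}=(\xx)\m$, but such a minimal reduction need not exist when the residue field is finite; the paper instead passes to $S=R[x]_{\m R[x]}$, applies the $\gll\le2$ case there, and descends along the faithfully flat extension $R\to S$. For the reverse inclusion, an MCM approximation of $\Omega^{d-1}k$ does not obviously produce a module in $\cm_{\m}(R)$ with syzygy $\Omega^{d}k$. The paper argues instead that $\Omega^{d}k$ is $(d+1)$-torsionfree and hence stably isomorphic to $\Omega\bigl(\Omega^{d}\Tr\Omega^{d+1}\Tr\Omega^{d}k\bigr)$, where the inner module lies in $\cm_{\m}(R)$ by the closure properties of Lemma~\ref{filt}. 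One also needs that $\Omega(\cm_{\m}(R))$ is closed under direct summands (Lemma~\ref{omega_smd}), which is not automatic since $\cm_{\m}(R)$ is not extension-closed.
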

%%%%%%%%%%%%%%%%%%%%
Let us explain the notation used in the above theorem.
For a full subcategory $\X$ of $\mod R$, we denote by $\Omega\X$ the (first) syzygy category of $\X$.
For a finitely generated $R$-module $X$ and an integer $n \ge 0$, $\left|X\right|_{n}$ consists of the modules built from $X$ by taking $(n-1)$ extensions (for the precise definition, see Definition \ref{def_size}).
We denote by $\add X$ the additive closure of $X$ in $\mod R$.
In addition, we denote by $\gll(R)$ the {\em generalized Loewy length} of $R$, defined as the infimum of the Loewy lengths $\ell\ell(R/(\bm{x}))$ taken over all systems of parameters $\bm{x}$ of $R$.
%%%%%%%%%%%%%%%%%%%%

As an application of Theorem \ref{mainthm_int}, we obtain the following result, which provides a sufficient condition for the syzygy category of maximal Cohen--Macaulay $R$-modules to be of finite type.
\begin{cor}[Corollary \ref{cor_fin_ocm}]\label{cor_int}
Let $(R,\m,k)$ be a singular Cohen--Macaulay local ring of dimension $d$.
Assume that the cohomology annihilator $\ca^{d+1}(R)$ contains $\m$, and that either $\gll(R)=2$ or $R$ has minimal multiplicity.
Then the category $\operatorname{\Omega CM}(R)$ is of finite type.
\end{cor}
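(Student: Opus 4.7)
The plan is to combine the cohomology annihilator hypothesis with Theorem~\ref{mainthm_int}. The key observations are that $\m\subseteq\ca^{d+1}(R)$ forces $R$ to be an isolated singularity and yields $\Omega^{d}M\in\cm_{\m}(R)$ for every $M\in\cm(R)$.

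First I would check that $\m\subseteq\ca^{d+1}(R)$ implies $R_{\p}$ is regular for every prime $\p\subsetneq\m$: for $a\in\m\setminus\p$, $a$ is a unit in $R_{\p}$ while still annihilating $\Ext^{d+1}_{R_{\p}}(-,-)$, forcing the global dimension of $R_{\p}$ to be at most $d$. Consequently every module in $\cm(R)$ is locally free on $\spec R\setminus\V(\m)$ by Auslander--Buchsbaum. Since $\m$ has grade $d$ in the Cohen--Macaulay ring $R$, Theorem~\ref{lem_int}(2) then applies and identifies $\cm(R)$ with the extension closure of $\cm_{\m}(R)$ inside $\cm(R)$.

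Next I would show $\Omega^{d}M\in\cm_{\m}(R)$ for every $M\in\cm(R)$. The dimension-shift $\Ext^{1}_{R}(\Omega^{d}M,-)\cong\Ext^{d+1}_{R}(M,-)$ together with the standard criterion ``$a\in\sa_{R}(L)$ iff $a\cdot\Ext^{1}_{R}(L,-)=0$'' gives $\m\subseteq\ca^{d+1}(R)\subseteq\sa_{R}(\Omega^{d}M)$. Applying $\Omega$ once more and invoking Theorem~\ref{mainthm_int} yields
\[
\Omega^{d+1}\cm(R)\subseteq\Omega\cm_{\m}(R)=\add(\Omega^{d}k\oplus R),
\]
which is of finite type.

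Finally, to bridge from $\Omega^{d+1}\cm(R)$ to $\Omega\cm(R)$, I would combine the extension-closure description with the horseshoe lemma. Any $M\in\cm(R)$ is an iterated extension of objects of $\cm_{\m}(R)$; iterating the horseshoe lemma expresses $\Omega M$, modulo free summands, as an iterated extension of objects of $\Omega\cm_{\m}(R)=\add(\Omega^{d}k\oplus R)$. The main obstacle will be to argue that this extension closure has only finitely many indecomposables; the natural route is to establish a vanishing or control statement on the stable self-$\Ext^{1}$ of $\Omega^{d}k\oplus R$ under the isolated singularity hypothesis. Once this is in place, the extension closure collapses to the additive closure $\add(\Omega^{d}k\oplus R)$, and $\Omega\cm(R)$ is of finite type.
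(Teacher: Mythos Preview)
Your final step has a genuine gap: you reduce to showing that the extension closure of $\add(\Omega^{d}k\oplus R)$ inside $\cm(R)$ is of finite type, and then hope for ``a vanishing or control statement on the stable self-$\Ext^{1}$ of $\Omega^{d}k\oplus R$''. No such statement is available in this generality. There is no reason, for an arbitrary Cohen--Macaulay isolated singularity of minimal multiplicity, that $\Ext^{1}_{R}(\Omega^{d}k,\Omega^{d}k)$ should vanish, and without it the extension closure can be strictly larger than the additive closure. So the argument, as written, does not close.

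The detour is also unnecessary. Your own steps 1 and 2 already contain the ingredients for the direct route the paper takes. You show that $R$ has an isolated singularity and that $\m\subseteq\sa_{R}(\Omega^{d}M)$ for every finitely generated $M$. The missing observation is that over an isolated singularity every $M\in\cm(R)$ is a direct summand of some $\Omega^{d}N$: choose any $R$-sequence $\bm{x}=x_{1},\dots,x_{d}$ in $\sa_{R}(M)$ (possible since $\sa_{R}(M)$ is $\m$-primary and $R$ is Cohen--Macaulay) and use the Koszul reconstruction $M\mid\Omega^{d}(M/\bm{x}M)$. Since $\cm_{\m}(R)$ is closed under direct summands, this gives $\cm(R)=\cm_{\m}(R)$ outright (this is exactly Proposition~\ref{c_m=c}). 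Then Theorem~\ref{mainthm_int} applies to $\cm(R)$ itself, yielding $\Omega\cm(R)=\Omega\cm_{\m}(R)=\add(\Omega^{d}k\oplus R)$, with no extension-closure bookkeeping required.
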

%%%%%%%%%%%%%%%%%%%%
Here, for $n\ge0$, the ($n$-th) {\em cohomology annihilator} of $R$, denoted $\ca^{n}(R)$, is defined as the set of elements of $R$ that annihilate the $R$-module $\Ext_{R}^{n}(M, N)$ for all finitely generated $R$-modules $M,N$.
This concept was introduced in \cite{Iyengar Takahashi 2016}.
By restricting Corollary \ref{cor_int} to the case where $R$ is Gorenstein, we can recover the recent theorem of Dey and Liu \cite[Theorem 1.2(1)]{DeyLiu}:
\begin{cor}[Dey--Liu]
Let $(R,\m)$ be a Gorenstein local ring whose cohomology annihilator coincides with $\m$.
If $R$ has minimal multiplicity, then $\cm(R)$ is of finite type.
\end{cor}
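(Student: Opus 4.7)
The plan is to derive the corollary from Corollary \ref{cor_int} in two steps: first verify the hypothesis on the cohomology annihilator, then lift the finiteness from $\Omega\cm(R)$ to $\cm(R)$ using the Gorenstein hypothesis.

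For any Cohen--Macaulay local ring $R$ of dimension $d$ one has an ascending chain $\ca^{n}(R) \subseteq \ca^{n+1}(R)$, and the cohomology annihilator satisfies $\ca(R) = \bigcup_{n} \ca^{n}(R)$. When $R$ is Gorenstein, I will show this union stabilizes at $n = d+1$. For any module $M$, the syzygy $\Omega^{d}M$ is MCM; and over a Gorenstein ring every MCM module $M'$ can be realized as $\Omega^{k} M''$ for some MCM $M''$ and any $k \ge 0$, obtained by iteratively applying $\Hom_{R}(-,R)$ to a projective resolution of $\Hom_{R}(M',R)$ (this uses $\Ext^{i}_{R}(M',R)=0$ for $i \ge 1$ and reflexivity of MCM modules over a Gorenstein ring). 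This yields $\Ext^{d+1}_{R}(M, N) \cong \Ext^{1}_{R}(\Omega^{d} M, N) \cong \Ext^{n}_{R}(M'', N)$ for a suitable MCM $M''$, so any $a \in \ca^{n}(R)$ also lies in $\ca^{d+1}(R)$. Hence $\ca(R) = \ca^{d+1}(R)$, and the assumption $\ca(R) = \m$ becomes $\ca^{d+1}(R) \supseteq \m$, fulfilling the required hypothesis.

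Under the minimal multiplicity hypothesis, Corollary \ref{cor_int} will then give that $\Omega\cm(R)$ is of finite type. To pass from here to $\cm(R)$, I observe that for each $M \in \cm(R)$, the dual $M' := \Hom_{R}(M, R)$ is MCM with $\Hom_{R}(M',R) \cong M$. Applying $\Hom_{R}(-,R)$ to a short exact sequence $0 \to \Omega M' \to F \to M' \to 0$ with $F$ free, and using $\Ext^{1}_{R}(M',R) = 0$, produces a short exact sequence $0 \to M \to \Hom_{R}(F,R) \to \Hom_{R}(\Omega M', R) \to 0$ with all three terms MCM. Therefore $M$ is a syzygy of an MCM module, which gives $\cm(R) \subseteq \Omega\cm(R)$, and since the reverse inclusion is automatic, $\cm(R) = \Omega\cm(R)$; finiteness transfers.

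The main obstacle will be the stabilization $\ca(R) = \ca^{d+1}(R)$ for Gorenstein rings, which is essentially a manifestation of the Frobenius property of $\cm(R)$ in the Gorenstein setting; once it is in hand, the two reductions (on the hypothesis side and on the conclusion side) are straightforward bookkeeping.
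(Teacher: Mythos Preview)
Your proposal is correct and follows essentially the same route as the paper. The paper (see Corollary \ref{DeyLiu5.6}) cites \cite[Proposition 3.4]{DeyLiu} for the stabilization $\ca(R)=\ca^{d+1}(R)$, whereas you supply a direct cosyzygy argument; and the paper closes by invoking $\Omega_R^{d+1}(\mod R)=\cm(R)$ together with Corollary \ref{fin_syz_type}, while you instead observe $\cm(R)=\Omega_R\cm(R)$ and apply Corollary \ref{cor_int}---both are the same Gorenstein fact dressed slightly differently.
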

The organization of this paper is as follows.
In Section 2, we first recall some basic definitions and known results. We then investigate the structural properties of $\mod_{I}(R)$ and establish several results, which also serve as preliminaries for the proof of the main theorem.
In Section 3, in the case where $R$ is Cohen--Macaulay, we prove a theorem concerning $\Omega(\cm_{\m}(R))$ and explore its applications.
\begin{conv}
Throughout this paper, all rings are assumed to be commutative noetherian.
All subcategories are assumed to be full and closed under isomorphism.
\end{conv}
%%%%%%%%%%%%%%%%%%%%
\section{Structural properties of $\operatorname{mod}_{I}(R)$ and $\operatorname{C}_{I}(R)$}
In this section, we first recall some basic notions concerning the stable annihilators of modules. 
We then investigate the properties of $\mod_{I}(R)$. 
In addition, mainly in the case where $R$ is a local ring, we study $\c_{I}(R)$, which is defined as the intersection of $\c(R)$ and $\mod_{I}(R)$, together with its syzygy category.

We start by recalling the notions of syzygies and additive closures of modules.
%%%%%%%%%%%%%%%%%%%%
\begin{dfn}\label{def_syz}
Let $R$ be a commutative noetherian ring.
\begin{enumerate}[\rm(1)]
\item
Let $M$ be a finitely generated $R$-module.
Consider a projective resolution of $M$ in $\mod R$:
\[
\cdots \xrightarrow{\delta_{n+1}} P_n \xrightarrow{\delta_n} P_{n-1} \xrightarrow{\delta_{n-1}}\cdots \xrightarrow{\delta_1}P_0 \xrightarrow{\delta_0}M \to 0.
\]
For each $n\geq0$, we define the {\em $n$-th syzygy of $M$} by $\Omega^{n}M=\image(\delta_{n})$.
Note that $\Omega^{n} M$ is uniquely defined up to projective summands.
When $R$ is a local ring, we take a {\em minimal free resolution} to define $\Omega^{n} M$.
In this case, $\Omega^{n} M$ is uniquely determined up to isomorphism.
\item
Let $\X$ be a subcategory of $\mod R$.
For a positive integer $n$, we denote by $\Omega^{n}\X$ the {\em $n$-th syzygy category} of $\X$, that is, the subcategory of $\mod R$ consisting of all $R$-modules $M$ such that there exists an exact sequence of the form $0 \to M \to F_{n-1} \to \cdots \to F_0 \to X \to 0 $, where $X \in \X$ and each $F_i$ is a finitely generated projective $R$-module. 
We set $\Omega^{0}\X=\X$.
\item
Let $M$ be a finitely generated $R$-module, and take a projective presentation $P_{1}\xrightarrow{\delta_1}P_0 \xrightarrow{\delta_0}M \to 0$ of $M$.
We define the {\em (Auslander) transpose} of $M$, denoted by $\Tr M$, as the cokernel of the homomorphism $\Hom_{R}(\delta_{1},R)$.
If $R$ is a local ring, we take the above presentation to be induced by a minimal free resolution of $M$.
\item
For a subcategory $\X$ of $\mod R$, we denote by $\add\X$ the {\em additive closure} of $\X$, that is, the subcategory of $\mod R$ consisting of direct summands of finite direct sums of modules in $\X$.
Note that $\add\X$ is the smallest subcategory of $\mod R$ that contains $\X$ and is closed under finite direct sums and direct summands.
\item
A subcategory $\X$ of $\mod R$ is {\em resolving} if $\X$ contains $\add_{R}R$ and is closed under direct summands, extensions, and kernels of epimorphisms in $\mod R$ (see \cite{APST, AB, AR, KS, stcm, arg} for instance).
\end{enumerate}
\end{dfn}
%%%%%%%%%%%%%%%%%%%%
\begin{rmk}
Let $R$ be a commutative noetherian ring.
\begin{enumerate}[\rm(1)]
\item
Let $\X$ be a subcategory of $\mod R$.
If $R$ is a local ring, then
\begin{equation}
\Omega^{n} \X = \nonumber
\begin{cases}
\X & \text{if $n=0$,} \\
\{\Omega^{n} X \oplus R^{\oplus m} \mid X\in \X, m\geq 0 \}& \text{if $n>0$.} 
\end{cases}
\end{equation}
\item
The subcategory $\add R$ coincides with the category of finitely generated projective $R$-modules.
Moreover, $\add R$ is a resolving subcategory.
\end{enumerate}
\end{rmk}
%%%%%%%%%%%%%%%%%%%%
Next, we define the category $\c(R)$, which was introduced in \cite{T2023}, and each module in $\c(R)$ can be regarded as a generalization of a maximal Cohen--Macaulay $R$-module.
\begin{dfn}\label{defofCR}
Let $R$ be a commutative noetherian ring.
\begin{enumerate}[\rm(1)]
\item 
We denote by $\c(R)$ the subcategory of $\mod R$ consisting of $R$-modules $M$ satisfying the inequality $\depth M_{\p}\geq \depth R_{\p}$ for all $\p\in\spec R$.
If $R$ is Cohen--Macaulay, then $\c(R)$ coincides with the category of maximal Cohen--Macaulay $R$-modules, which we denote by $\cm(R)$.
\item
For a finitely generated $R$-module $M$, we define the {\em (large) restricted flat dimension} of $M$, 
denoted by $\Rfd(M)$, as follows.
\begin{center}
$\Rfd(M) 
= \sup \{\depth R_{\p}-\depth M_{\p} \mid
\p \in \spec R \}$. 
\end{center}
By virtue of \cite[Theorem 1.1]{Avramov Iyengar Lipman} and \cite[Proposition 2.2, Theorem 2.4]{Christensen Foxby Frankild}, we have 
$\Rfd(M) \in \mathbb{Z}_{\geq 0} \cup \{-\infty \}$. 
Clearly, one has $\Rfd(M)\leq\dim R$.
\end{enumerate}
\end{dfn}
%%%%%%%%%%%%%%%%%%%%
\begin{rmk}\label{rmk_defofCR}
Let $R$ be a commutative noetherian ring.
\begin{enumerate}[\rm(1)]
\item
By \cite[Proposition 1.2.9]{BH}, the category $\c(R)$ is resolving.
\item
Let $M$ be an $R$-module in $\c(R)$.
Then every weak $R$-sequence is also a weak $M$-sequence.
\item
For any finitely generated $R$-module $M$, and for all integers $i\geq\Rfd(M)$, one has $\Omega^{i}M\in\c(R)$.
If $R$ has finite Krull dimension $d$, then $\Omega^{d}(\mod R)\subseteq \c(R)$.
\end{enumerate}
\end{rmk}
%%%%%%%%%%%%%%%%%%%%
Next, we define the stable annihilators and the nonfree loci of modules.
\begin{dfn}\label{nfloc_stann}
Let $R$ be a commutative noetherian ring, and $M$ a finitely generated $R$-module.
\begin{enumerate}[\rm(1)]
\item
We denote by $\lmod(R)$ the stable category of $\mod R$ (cf. \cite{AB}). We write $\lEnd_{R}(M)=\End_{\lmod(R)}(M)$; that is, $\lEnd_{R}(M)$ is the quotient of $\End_{R}(M)$ by the homomorphisms $M\to M$ factoring through a finitely generated projective $R$-module.
The {\em stable annihilator} of $M$ refers to the annihilator of $\lEnd_{R}(M)$ as an $R$-module. We denote it by $\sa_{R}(M)$.
For a subcategory $\X$ of $\mod R$, we set $\sa_{R}(\X)=\bigcap_{X\in\X}\sa_{R}(X)$.
\item
We denote by $\nf(M)$ the {\em nonfree locus} of $M$, that is, the set of prime ideals $\p$ such that $M_{\p}$ is not a free $R_{\p}$-module.
\item
Let $\Phi$ be a subset of $\spec R$. 
We denote by $\mod_{\Phi}(R)$ the subcategory of $\mod R$ consisting of $R$-modules $M$ such that $\nf(M)\subseteq\Phi$.
We set $\c_{\Phi}(R)=\c(R)\cap\mod_{\Phi}(R)$.
If $R$ is Cohen--Macaulay, we denote $\c_{\Phi}(R)$ by $\cm_{\Phi}(R)$.
\end{enumerate}
\end{dfn}
%%%%%%%%%%%%%%%%%%%%
\begin{rmk}\label{rmk_nfloc_stann}
Let $R$ be a commutative noetherian ring.
\begin{enumerate}[\rm(1)]
\item
For finitely generated $R$-modules $M$ and $N$, the following statements hold (see \cite{AB}, \cite[Lemma 2.2]{DKT2021}, \cite[Lemma 2.3]{Esenetepe}, \cite[Lemma 2.14]{Iyengar Takahashi 2016}, and \cite[Proposition 2.10]{T2009} for instance).
\begin{enumerate}[\rm(a)]
\item
The equalities $\sa_{R}(M)=\ann_{R}\Ext_{R}^{1}(M,\Omega M)=\ann_{R}\Ext_{R}^{>0}(M,\mod R)$ hold.
\item
One has $\nf(M)=\V(\sa_{R}(M))$.
\item
The inclusion $\nf(M)\subseteq\V(I)$ holds if and only if there exists an integer $n\geq0$ such that $I^{n}\subseteq \sa_{R}(M)$.
\item
One has $\sa_{R}(M)=R$ if and only if $M$ is a projective $R$-module.
\item
The equality $\sa_{R}(M\oplus N)=\sa_{R}(M)\cap\sa_{R}(N)$ holds.
\item
The equality $\sa_{R}(M)=\sa_{R}(\Tr_{R}(M))$ holds.
\item
The inclusion $\sa_{R}(M)\subseteq\sa_{R}(\Hom_{R}(M,R))$ holds.
\item
The inclusion $\sa_{R}(M)\subseteq\sa_{R}(\Omega M)$ holds.
\end{enumerate}
\item
For a subset $\Phi$ of $\spec R$, the category $\mod_{\Phi}(R)$ is a resolving subcategory.
Hence, the category $\c_{\Phi}(R)$ is also a resolving subcategory.
\item
Assume that $(R,\m)$ is a local ring. 
Recall that $R$ has an {\em isolated singularity} if the localization $R_{\p}$ is a regular local ring for every prime ideal $\p\neq\m$.
It follows from \cite[Theorems 1.3.3 and 2.2.7]{BH} that the equality $\c_{\V(\m)}(R)=\c(R)$ holds if and only if $R$ has an isolated singularity.
If $t=\depth R,$ then $\Omega^{t}(\mod_{\V(\m)}(R)) \subseteq \c_{\V(\m)}(R)$.
\item
Let $(R,\m,k)$ be a Cohen--Macaulay local ring with a canonical module $\omega$.
By virtue of \cite[Theorem 2.3]{DKT2021}, the ring $R$ is nearly Gorenstein if and only if $\omega\in\mod_{\m}(R)$.
\end{enumerate}
\end{rmk}
%%%%%%%%%%%%%%%%%%%%
Next, using the stable annihilators of modules and an ideal $I$ of the ring $R$, we define the subcategory $\mod_{I}(R)$, which is the main focus of this paper.
\begin{dfn}\label{defofmodIR}
Let $R$ be a commutative noetherian ring and $I$ an ideal of $R$.
We denote by $\mod_{I}(R)$ the subcategory of $\mod R$ consisting of modules $M$ such that $I\subseteq\sa_{R}(M)$.
We define $\c_{I}(R)=\c(R)\cap\mod_{I}(R)$.
When $R$ is Cohen--Macaulay, we set $\cm_{I}(R)=\cm(R)\cap\mod_{I}(R)$.
\end{dfn}
%%%%%%%%%%%%%%%%%%%%
We first state some basic properties of $\mod_{I}(R)$ and $\c_{I}(R)$, such as their closure properties.
\begin{lem}\label{filt}
Let $R$ be a commutative noetherian ring and $I$ an ideal of $R$.
\begin{enumerate}[\rm(1)]
\item\label{filt_incl}
If $J$ is an ideal of $R$ such that $I\subseteq J$, then one has $\mod_{J}(R)\subseteq\mod_{I}(R)$.
\item\label{filt_filt}
There exists an ascending chain of subcategories of $\mod R$ as follows:
\begin{equation}\label{modIfilt}
\add R=\mod_{I^{0}}(R)\subseteq\mod_{I}(R)\subseteq\mod_{I^{2}}(R)\subseteq\cdots\subseteq\displaystyle\bigcup_{n\geq0}\mod_{I^{n}}(R)=\mod_{\V(I)}(R).
\end{equation}
\item\label{filt_closure}
The subcategory $\mod_{I}(R)$ is closed under finite direct sums, direct summands, transposes, $R$-duals, and syzygies.
Moreover, $\mod_{I}(R)$ contains modules $R/I$ and $I$.
\item\label{filt_filt_c}
There exists an ascending chain of subcategories of $\c(R)$ as follows:
\begin{equation}\label{CIfilt}
\add R=\c_{I^{0}}(R)\subseteq\c_{I}(R)\subseteq\c_{I^{2}}(R)\subseteq\cdots\subseteq\displaystyle\bigcup_{n\geq0}\c_{I^{n}}(R)=\c_{\V(I)}(R).
\end{equation}
\item
\begin{enumerate}[\rm(a)]
\item
The subcategory $\c_{I}(R)$ is closed under finite direct sums, direct summands, and syzygies.
\item 
If $R$ has finite Krull dimension, then the subcategory $\c_{I}(R)$ is closed under $\Omega^{i}\Tr(-)$ and $\Omega^{i}\Hom_{R}(-,R)$ for all $i\geq\dim R$.
\end{enumerate}
\end{enumerate}
\end{lem}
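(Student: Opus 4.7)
My plan is to verify the lemma part by part, reducing each closure property to a corresponding statement in Remark \ref{rmk_nfloc_stann} about how the stable annihilator transforms under various constructions. Most of the statements are bookkeeping once the right facts are lined up.

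Part (1) is immediate from the definition of $\mod_I(R)$. Part (2) then follows from (1) applied to the chain $I^0\supseteq I\supseteq I^2\supseteq\cdots$, together with two characterizations from Remark \ref{rmk_nfloc_stann}(1): the equality $\sa_R(M)=R$ characterizing projective modules (item (d)) gives the leftmost equality $\mod_{I^0}(R)=\add R$, and the equivalence $\nf(M)\subseteq\V(I)$ iff some $I^n\subseteq\sa_R(M)$ (item (c)) gives the rightmost equality. For (3), each listed closure corresponds directly to an item in Remark \ref{rmk_nfloc_stann}(1): (e) for direct sums and summands, (f) for transposes, (g) for $R$-duals, and (h) for syzygies. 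That $R/I\in\mod_I(R)$ holds because every element of $I$ acts as $0$ on $R/I$, so the multiplication map factors through $0$; and $I\in\mod_I(R)$ follows because $I$ is a syzygy of $R/I$ (up to a free summand) and $\mod_I(R)$ is syzygy-closed.

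Part (4) is immediate from (2) and the inclusion $\add R\subseteq\c(R)$. Part (5)(a) combines (3) with the fact that $\c(R)$ is resolving (Remark \ref{rmk_defofCR}(1)); the intersection $\c_I(R)$ inherits closure under direct sums, summands, and syzygies. For (5)(b)-(d), the $\mod_I(R)$-membership of $\Omega^i\Tr M$ and $\Omega^i\Hom_R(M,R)$ is a direct consequence of the closure properties in (3). The $\c(R)$-membership uses Remark \ref{rmk_defofCR}(3): $\Omega^i N\in\c(R)$ whenever $i\geq\Rfd(N)$. For (c), I would apply the universal bound $\Rfd(N)\leq\dim R$, and for (d) the sharper bound $\Rfd(N)\leq\depth R$ valid for local rings, both applied to $N=\Tr M$ or $N=\Hom_R(M,R)$.

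The main obstacle is (5)(b), where the threshold $i\geq\Rfd(M)$ is tied to the module $M$ itself rather than to $\Tr M$ or $\Hom_R(M,R)$. To bridge this I need the inequalities $\Rfd(\Tr M)\leq\Rfd(M)$ and $\Rfd(\Hom_R(M,R))\leq\Rfd(M)$ under the assumption $M\in\mod_I(R)$. The plan here is to exploit the local picture: on $\spec R\setminus\V(I)$ the module $M$ is locally free, so $\Tr M$ and $\Hom_R(M,R)$ are locally free there as well and contribute nothing to $\Rfd$; for primes $\p\in\V(I)$, a depth-chasing argument using the defining exact sequence $0\to\Hom_R(M,R)\to P_0^*\to P_1^*\to\Tr M\to 0$ together with the depth lemma should yield a pointwise bound $\depth R_\p-\depth(\Tr M)_\p\leq\depth R_\p-\depth M_\p$, from which the desired global inequality follows by taking suprema.
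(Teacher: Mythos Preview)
Your treatment of parts (1)--(4) and (5)(a),(c),(d) is correct and matches the paper's approach: the paper too invokes Remark~\ref{rmk_nfloc_stann}(1) for the closure properties in (3), obtains $R/I\in\mod_I(R)$ from $\sa_R(R/I)\supseteq\ann_R\End_R(R/I)=I$, and appeals to Remark~\ref{rmk_defofCR}(3) together with (3) for the assertions in (5).

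The difficulty is (5)(b). Your proposed depth-chasing argument cannot work as written: the four-term sequence $0\to M^*\to P_0^*\to P_1^*\to\Tr M\to 0$ does not contain $M$, so no application of the depth lemma to it can produce the pointwise inequality $\depth R_\p-\depth(\Tr M)_\p\le\depth R_\p-\depth M_\p$ you are aiming for. In fact the desired inequality $\Rfd(\Tr M)\le\Rfd(M)$ is false in general. Over $R=k[[x,y]]$ take $M=\Tr k$, the cokernel of $\left(\begin{smallmatrix}x\\y\end{smallmatrix}\right)\colon R\to R^2$. Then $\sa_R(M)=\sa_R(k)=\m$, so $M\in\mod_\m(R)$; from the short resolution $0\to R\to R^2\to M\to 0$ and Auslander--Buchsbaum one gets $\depth M=1$, hence $\Rfd(M)=1$, while $\Tr M\cong k$ has $\Rfd(k)=2$. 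With $I=\m$ and $i=1\ge\Rfd(M)$, the conclusion of (5)(b) would force $\Omega_R^1 k=\m$ to lie in $\c(R)$, which fails since $\depth\m=1<2=\depth R$.

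Thus (5)(b), read literally, cannot be proved; the intended threshold is presumably $i\ge\Rfd(\Tr M)$ (respectively $i\ge\Rfd(\Hom_R(M,R))$), under which the paper's one-line justification via Remark~\ref{rmk_defofCR}(3) and part (3) goes through with no further work. This reading is also consistent with how the lemma is used: only (5)(c) and (5)(d), which rest on the uniform bounds $\Rfd\le\dim R$ and, in the local case, $\Rfd\le\depth R$, are invoked elsewhere in the paper.
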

\begin{proof}
(\ref{filt_incl}) Clear.

(\ref{filt_filt}) The equalities and inclusions follow from Remark \ref{rmk_nfloc_stann} and (\ref{filt_incl}), respectively. 

(\ref{filt_closure}) The closure properties follow from Remark \ref{rmk_nfloc_stann}.
Since $\sa_{R}(R/I)\supseteq\ann_{R}\End_{R}(R/I)=I$, we have $R/I\in\mod_{I}(R)$.
Moreover, one has $I=\Omega(R/I)\in\mod_{I}(R)$.

(\ref{filt_filt_c}) By taking the intersection with $\c(R)$ in (\ref{modIfilt}), we obtain the conclusion.

(5)(a) Since both subcategories $\c(R)$ and $\mod_{I}(R)$ are closed under finite direct sums, direct summands, and syzygies, it follows that $\c_{I}(R)$ is also closed under these operations.
The assertion in (b) follows from Remark \ref{rmk_defofCR} and (\ref{filt_closure}).
\end{proof}
%%%%%%%%%%%%%%%%%%%%
\begin{rmk}
The filtrations of subcategories in (\ref{modIfilt}) and (\ref{CIfilt}) do not stabilize in general.
We now give examples where the equalities fail.
\begin{enumerate}[\rm(1)]
\item
Let $R$ be a noetherian local ring and $I$ a proper ideal of $R$ with $\grade I>0$.
Then for each integer $n>0$, we have $\sa_{R}(R/I^{n})=\ann_{R}I^{n}+I^{n}=I^{n}$, where the first equality follows from \cite[Lemma 3.6]{DeyLiu}.
Hence by Nakayama's lemma, we obtain $R/I^{n}\in\mod_{I^{n}}(R)\setminus\mod_{I^{n-1}}(R)$ for all $n>0$.
\item
Consider $R=k[\![x,y]\!]/(x^{2})$ and for each $n>0$, let $M_{n}$ be the cokernel of the endomorphism of $R^{\oplus 2}$ defined by the matrix 
$\left(\begin{smallmatrix}\overline{x}&\overline{y}^{n}\\0&-\overline{x}\end{smallmatrix}\right)$.
Then one has $\sa_{R}(M_{n})=(\overline{x},\overline{y}^{n})$ by \cite[Proposition 3.3(1)]{Kimura}.
Hence, for $I=(\overline{y})$, we obtain $M_{n}\in\cm_{I^{n}}(R)\setminus\cm_{I^{n-1}}(R)$ for all $n>0$.
\end{enumerate}
\end{rmk}
%%%%%%%%%%%%%%%%%%%%
Next, for a given subcategory $\X$ of $\mod R$, we define the subcategory obtained from modules in $\X$ by taking a fixed number of extensions. This concept was introduced in \cite{DT_rad} and \cite{DT_dim}.
\begin{dfn}\label{def_size}
\begin{enumerate}[\rm(1)]
\item
For subcategories $\X$, $\Y$ of $\mod R$, we define $\X\ast\Y$ as the subcategory of $\mod R$ consisting of objects $Z$ such that there exists an exact sequence $0\to X\to Z\to Y\to0$ with $X\in\X$, and $Y\in\Y$.
\item
Let $\X$ be a subcategory of $\mod R$, and $r$ a nonnegative integer.
We inductively define ${\left|\X\right|}_{r}^{R}$ as follows.
\begin{equation}
{\left| \X \right|}_{r}^{R} = \nonumber
\begin{cases}
0 & \text{if $r=0$,} \\
\add\X & \text{if $r=1$,} \\
\add\left({{\left| \X \right|}_{r-1}^{R}} \ast {\left|\X \right|}^{R}_{1}\right) & \text{if $r>1$.}
\end{cases}
\end{equation}
\item
For a subcategory $\X$ of $\mod R$, we denote by $\ext\X$ the smallest subcategory of $\mod R$ that contains $\X$ and is closed under finite direct sums, direct summands, and extensions.
Note that there is a filtration of subcategories of $\mod R$ given by
\[
0\subseteq\add\X\subseteq{\left| \X \right|}_{2}^{R} \subseteq\cdots\subseteq\bigcup_{i\geq0}{\left| \X \right|}_{i}^{R}=\ext\X.
\]
\end{enumerate}
\end{dfn}
%%%%%%%%%%%%%%%%%%%%
Next, we investigate the relationship between $\mod_{I}(R)$ and $\mod_{\V(I)}(R)$. 
In general, $\mod_{I}(R)$ is not closed under extensions and hence is not resolving. 
However, its extension closure coincides with $\mod_{\V(I)}(R)$.
\begin{prop}\label{extofmodI}
Let $R$ be a commutative noetherian ring and $I$ an ideal of $R$.
Then one has 
\[
\ext(\mod_{I}(R))=\mod_{\V(I)}(R).
\] 
In particular, the category $\mod_{I}(R)$ is closed under extensions if and only if the equality $\mod_{I}(R)=\mod_{\V(I)}(R)$ holds.
\end{prop}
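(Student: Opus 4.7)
\emph{Plan.} For the inclusion $\ext_{R}(\mod_{I}(R)) \subseteq \mod_{V(I)}(R)$, I would observe that Lemma~\ref{filt}(\ref{filt_filt}) gives $\mod_{I}(R) \subseteq \mod_{V(I)}(R)$, and that by Remark~\ref{rmk_nfloc_stann}(2) the subcategory $\mod_{V(I)}(R)$ is resolving, in particular closed under extensions and direct summands. This immediately gives the inclusion.

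For the reverse inclusion, given $M \in \mod_{V(I)}(R)$, Remark~\ref{rmk_nfloc_stann}(1)(c) supplies an integer $n \geq 0$ with $I^{n} \subseteq \sa_{R}(M)$. The plan is to prove $\mod_{I^{n}}(R) \subseteq \ext_{R}(\mod_{I}(R))$ by induction on $n$. The cases $n = 0$ (where $M$ is projective) and $n = 1$ (where $M$ lies in $\mod_{I}(R)$ directly) are immediate.

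For the inductive step with $n \geq 2$, I would use the natural short exact sequence
\[
0 \to I^{n-1}M \to M \to M/I^{n-1}M \to 0.
\]
The quotient $M/I^{n-1}M$ is annihilated by $I^{n-1}$, hence is an $R/I^{n-1}$-module, and its $I$-adic filtration $M/I^{n-1}M \supseteq I\cdot(M/I^{n-1}M) \supseteq \cdots \supseteq 0$ has subquotients annihilated by $I$, each lying in $\mod_{I}(R)$ by Lemma~\ref{filt}(\ref{filt_closure}). So $M/I^{n-1}M \in \ext_{R}(\mod_{I}(R))$. The key remaining task is to show $I^{n-1}M \in \ext_{R}(\mod_{I}(R))$, for which I aim to establish $I^{n-1}M \in \mod_{I^{n-1}}(R)$; then the inductive hypothesis places $I^{n-1}M$ in $\ext_{R}(\mod_{I}(R))$, and extension closure yields $M$ as well.

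The main obstacle is the sharp bound $I^{n-1} \subseteq \sa_{R}(I^{n-1}M)$. The naive long exact sequence of $\Ext$ applied to the above sequence only yields the weaker $I^{2n-1} \subseteq \sa_{R}(I^{n-1}M)$, which does not suffice. To obtain the sharper bound, for each $r \in I^{n-1}$ and each generator $c_{i}$ of $I^{n-1}$ the product $rc_{i}$ lies in $I^{2(n-1)} \subseteq I^{n} \subseteq \sa_{R}(M)$ (using $n \geq 2$), so each $rc_{i}\cdot \mathrm{id}_{M}$ admits a factorization $M \xrightarrow{\alpha_{i}} P_{i} \xrightarrow{\beta_{i}} M$ through a projective; assembling these along the surjection $\phi \colon M^{l} \twoheadrightarrow I^{n-1}M$, $(m_{i}) \mapsto \sum c_{i}m_{i}$, produces a candidate factorization of $r \cdot \mathrm{id}_{I^{n-1}M}$ through $\bigoplus P_{i}$. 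The technical subtlety is that the assembled map $M^{l} \to \bigoplus P_{i}$ does not automatically descend along $\phi$ (its restriction to $\ker \phi$ need not vanish); handling this -- either by choosing the factorizations $\alpha_{i}$ compatibly with the syzygies among the generators $c_{i}$, or via an auxiliary pullback that absorbs the obstruction into a projective -- is the core difficulty of the argument.
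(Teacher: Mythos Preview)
Your proposal contains an acknowledged gap: the key claim $I^{n-1}\subseteq\sa_{R}(I^{n-1}M)$ is never established, and the two fixes you sketch (choosing the $\alpha_{i}$ compatibly with the syzygies among the $c_{i}$, or an auxiliary pullback) remain gestures rather than arguments. The obstruction is genuine: the assembled map $\alpha\colon M^{l}\to\bigoplus P_{i}$ has no reason to vanish on $\ker\phi$, and since $M$ is not projective there is no exact-sequence mechanism that converts control of $\sa_{R}(M)$ into the required control of $\sa_{R}(I^{n-1}M)$ one degree lower. Without this step the induction does not move, because the weaker bound $I^{2n-1}\subseteq\sa_{R}(I^{n-1}M)$ has $2n-1\geq n$, so the exponent never decreases.

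The paper avoids this difficulty completely by working from the other direction. Rather than filtering an arbitrary $M\in\mod_{\V(I)}(R)$, it invokes \cite[Theorem~3.1]{BHST}, which identifies an explicit generating set for $\mod_{\V(I)}(R)$ under extension closure: the modules $\Omega_{R}^{i}(R/\p)$ for $\p\in\V(I)$ and $0\leq i\leq\mu(I)$, together with $R$. For each such $\p$ one has $\sa_{R}(R/\p)\supseteq\p\supseteq I$, so $R/\p\in\mod_{I}(R)$, and closure under syzygies (Lemma~\ref{filt}(\ref{filt_closure})) then places every $\Omega_{R}^{i}(R/\p)$ in $\mod_{I}(R)$ as well. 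Taking extension closures gives $\mod_{\V(I)}(R)\subseteq\ext_{R}(\mod_{I}(R))$ in one line. The moral: once a generating set for $\mod_{\V(I)}(R)$ is known, the inclusion is immediate, whereas decomposing a general $M$ via its $I$-adic filtration forces you to track stable annihilators of submodules, which is exactly where your argument stalls.
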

\begin{proof}
Since $\mod_{\V(I)}(R)$ is resolving, the inclusion $\ext(\mod_{I}(R))\subseteq\mod_{\V(I)}(R)$ holds.
By Lemma \ref{filt}, we have 
\[
\mod_{I}(R)\supseteq\{\Omega^{i}(R/\p)\mid 0\leq i\leq \mu(I),\ \p\in \V(I)\}\cup\{R\},
\]
where $\mu(I)$ denotes the minimal number of generators of $I$.
Hence, it follows that 
\[
\ext(\mod_{I}(R))\supseteq\ext\left(\{\Omega^{i}(R/\p)\mid 0\leq i\leq \mu(I),\ \p\in \V(I)\}\cup\{R\}\right)=\mod_{\V(I)}(R),
\]
where the equality follows from \cite[Theorem 3.1]{BHST}.
\end{proof}
%%%%%%%%%%%%%%%%%%%%
Here, we consider an analogue of Proposition \ref{extofmodI} for $\c_{I}(R)$. 
We will see that this holds under certain assumptions on the ring $R$ or on the ideal $I$.
\begin{prop}\label{extofCI}
Let $(R,\m,k)$ be a noetherian local ring and $I$ an ideal of $R$.
Assume that one of the following conditions holds.
\begin{enumerate}[\rm(1)]
\item
The ring $R$ is Gorenstein.
\item
The ideal $I$ is $\m$-primary.
\end{enumerate}
Then one has 
$\ext(\c_{I}(R))=\c_{\V(I)}(R)$.
In particular, in this case, the category $\c_{I}(R)$ is closed under extensions if and only if the equality $\c_{I}(R)=\c_{\V(I)}(R)$ holds.
\end{prop}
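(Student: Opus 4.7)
The inclusion $\ext_R(\c_I(R))\subseteq\c_{\V(I)}(R)$ is immediate: $\c_{\V(I)}(R)$ is a resolving subcategory of $\mod R$ by Remark \ref{rmk_nfloc_stann}(2), hence closed under extensions, and it contains $\c_I(R)$ by Lemma \ref{filt}(\ref{filt_filt_c}). The bulk of the work is the reverse inclusion.

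Setting $t=\depth R$, I first observe that $\{R\}\cup\{\Omega_R^{t+i}(R/\p):\p\in\V(I),\,0\le i\le\mu(I)\}\subseteq\c_I(R)$: by Lemma \ref{filt}(\ref{filt_closure}), $R/\p\in\mod_I(R)$ and $\mod_I(R)$ is closed under syzygies, while Remark \ref{rmk_defofCR}(3) ensures that $t$-th syzygies land in $\c(R)$. Given $M\in\c_{\V(I)}(R)\subseteq\mod_{\V(I)}(R)$, Proposition \ref{extofmodI} yields a module $M'$, having $M$ as a direct summand, with a finite filtration $0=M'_0\subset\cdots\subset M'_r=M'$ whose subquotients lie in $\mod_I(R)$. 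Applying the horseshoe lemma $t$ times produces a filtration of $\Omega_R^t M'$, up to projective summands, whose subquotients are $t$-th syzygies of modules in $\mod_I(R)$ and hence lie in $\c_I(R)$. Consequently, $\Omega_R^t M\in\ext_R(\c_I(R))$; what remains is to lift this conclusion to $M\in\ext_R(\c_I(R))$.

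In case (1), the Gorenstein hypothesis makes the stable category $\lcm(R)$ triangulated with $\Omega_R$ an autoequivalence. Since the stable annihilator is invariant under shifts on $\lcm(R)$, $\cm_I(R)$ is stable under $\Omega_R^{\pm1}$, so the extension closure $\ext_R(\c_I(R))\subseteq\cm(R)$ is itself closed under $\Omega_R^{-1}$. Noting that $d=t$ for Gorenstein local rings and applying $\Omega_R^{-d}$ to $\Omega_R^d M$ recovers $M$ modulo projective summands, which are harmless since $R\in\c_I(R)$. This proves $M\in\ext_R(\c_I(R))$.

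Case (2), with $\grade I=\depth R=t$, is the main obstacle, because $\Omega_R$ need not be invertible in any triangulated sense here. Instead, one should exploit the $R$-regular sequence $\bm{x}\subseteq I$ of length $t$, which is $M$-regular for every $M\in\c(R)$ by Remark \ref{rmk_defofCR}(2), to realize $M$ itself as a $t$-th syzygy of some $N\in\mod_{\V(I)}(R)$. Concretely, iteratively embed $M$ into a free module $M\hookrightarrow F_0$—possible since $\c(R)$-modules are torsionless, as $\p\in\ass M$ would force $\depth R_\p=0$—set $N_1=F_0/M$, note that $N_1\in\mod_{\V(I)}(R)$ because $M_\q\hookrightarrow F_{0,\q}$ splits off a direct summand for every $\q\notin\V(I)$, and repeat. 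The hypothesis $\grade I=t$ provides the depth control needed to ensure that each successive cokernel remains torsionless over the $t$ steps. Once $M\cong\Omega_R^t N$ is established with $N\in\mod_{\V(I)}(R)$, applying Proposition \ref{extofmodI} to $N$ and pushing the resulting filtration through $t$ uses of the horseshoe lemma gives $M\in\ext_R(\c_I(R))$. Verifying that the iterative syzygy construction succeeds for $t$ steps under the grade hypothesis is the most delicate piece of the argument.
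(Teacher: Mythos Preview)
Your treatment of case~(1) is essentially the paper's argument: both use that $\cm_I(R)$ is closed under $\Omega_R^{-1}$ in the Gorenstein setting, so that once $\Omega_R^d M\in\ext_R(\c_I(R))$ is known, applying $\Omega_R^{-d}$ recovers $M$. The preliminary observation that $\Omega_R^t M\in\ext_R(\c_I(R))$ via Proposition~\ref{extofmodI} and the horseshoe lemma is also sound.

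Case~(2), however, has a genuine gap. Your iterative embedding argument asserts that for an arbitrary embedding $M\hookrightarrow F_0$ the localized map $M_\q\hookrightarrow F_{0,\q}$ splits whenever $M_\q$ is free. This is false: a free module can sit inside another free module as a non-split submodule (e.g.\ $R\xrightarrow{x}R$ for a nonzerodivisor $x$). One can rescue this step by using the specific embedding coming from dualizing a free presentation of $M^*$, but you do not say so. More seriously, the claim that ``the hypothesis $\grade I=t$ provides the depth control needed to ensure that each successive cokernel remains torsionless over the $t$ steps'' is left entirely unverified. After one step the depth lemma only gives $\depth N_{1,\p}\ge\depth R_\p-1$, so a prime $\p$ with $\depth R_\p=1$ could lie in $\ass N_1\setminus\ass R$, obstructing a second embedding; the grade hypothesis on $I$ says nothing about such primes. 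You acknowledge this is ``the most delicate piece'' but provide no mechanism.

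The paper takes a completely different route for case~(2): it avoids iterated embeddings altogether and instead applies the Koszul reconstruction theorem \cite[Corollary~3.2]{kos}. One extends an $R$-regular sequence $x_1,\dots,x_t$ of length $t=\grade J$ (where $J=\sa_R(M)$) to a generating set $\bm{x}=x_1,\dots,x_m$ of $J$, obtaining exact sequences $0\to H_i(\bm{x},M)\to E_i\to\Omega_R E_{i-1}\to 0$ with $M$ a summand of $E_m$. Grade sensitivity forces $H_m=\cdots=H_{m-t+1}=0$, so $E_m\cong\Omega_R^t E_{m-t}$; since each $H_j(\bm{x},M)$ is annihilated by $J$, the module $E_{m-t}$ is filtered by modules in $\mod_J(R)$, and applying $\Omega_R^t$ lands in $\ext_R(\c_J(R))$. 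This machinery produces the required ``$M$ as a $t$-th syzygy'' statement in one stroke, with the base module automatically carrying the right annihilator, rather than hoping successive cokernels remain embeddable.
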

\begin{proof}
The inclusion $\ext(\c_{I}(R))\subseteq\c_{\V(I)}(R)$ is immediate.
We begin by assuming (1).
We claim that the following equality holds:
\[
\cm_{\V(I)}(R)=\ext\left(\{\Omega^{-d}\Omega^{d+i}(R/\p)\mid 0\leq i\leq \mu(I),\ \p\in \V(I)\}\cup\{R\}\right),
\]
where $d=\dim R$.
Note that the syzygy functor on the stable category of $\cm(R)$ is an equivalence, with quasi-inverse given by the cosyzygy functor $\Omega^{-1}(-)$.
This implies that for any maximal Cohen--Macaulay $R$-module $M$, one has 
\[
\sa_{R}(M)=\sa_{R}(\Omega^{-1}M).
\]
In particular, $\cm_{I}(R)$ is closed under cosyzygies.
Combining this with the fact that $\cm_{\V(I)}(R)$ is resolving, we obtain the inclusion 
\[
\cm_{\V(I)}(R)\supseteq\ext\left(\{\Omega^{-d}\Omega^{d+i}(R/\p)\mid 0\leq i\leq \mu(I),\ \p\in \V(I)\}\cup\{R\}\right).
\]
Conversely, let $M$ be an $R$-module in $\cm_{\V(I)}(R)$.
Set $\X=\{\Omega^{i}(R/\p)\mid 0\leq i\leq \mu(I),\ \p\in \V(I)\}\cup\{R\}$.
By \cite[Theorem 3.1]{BHST}, we have $M\in\ext\X$.
Since $M$ is maximal Cohen--Macaulay, it is stably isomorphic to $\Omega^{-d}\Omega^{d}M$. 
Hence,
\[
M\in\add\left(\Omega^{-d}\Omega^{d}\left(\ext\X\right)\cup\{R\}\right)\subseteq\ext\left(\Omega^{-d}\Omega^{d}\X\cup\{R\}\right).
\]
This shows that 
\[
M\in\ext\left(\{\Omega^{-d}\Omega^{d+i}(R/\p)\mid 0\leq i\leq \mu(I),\ \p\in \V(I)\}\cup\{R\}\right).
\]
Hence, the assertion follows from the equality and a similar argument as in the proof of Proposition \ref{extofmodI}.

Next, we consider the case where (2) holds.
Let $M$ be an $R$-module in $\c_{\V(I)}(R)$.
Set $J=\sa_{R}(M)$.
Since $\grade I=\depth R=t$ and $I^{n}\subseteq J$ for some $n\ge1$, there exists an $R$-sequence $x_{1},\ldots,x_{t}$ contained in $J$.
We may choose additional elements $x_{t+1},\ldots,x_{m}$ so that $\bm{x}=x_{1},\ldots,x_{m}$ generates $J$.
By \cite[Corollary 3.2]{kos}, there exists an exact sequence 
\begin{equation}\label{ex_kos}
0\to \mathrm{H}_{i}(\bm{x},M)\to E_{i}\to \Omega E_{i-1}\to 0
\end{equation}
for each $1\leq i \leq m$, where $E_{0}=\mathrm{H}_{0}(\bm{x},M)$, and $M$ is a direct summand of $E_{m}$.
Since $M\in\c(R)$, the sequence $x_{1},\ldots,x_{t}$ is also an $M$-sequence.
Thus, $\grade(J,M)\geq t$.
Then the grade sensitivity (\cite[Theorem 1.6.17]{BH}) implies that $\mathrm{H}_{m}(\bm{x},M)=\cdots=\mathrm{H}_{m-t+1}(\bm{x},M)=0$.
Combining this with (\ref{ex_kos}), we see that $E_{m} \cong \Omega^{t}E_{m-t}$.
On the other hand, the module $E_{m-t}$ belongs to $\left|\bigoplus_{i=0}^{m-t}\Omega^{i}\mathrm{H}_{m-t-i}(\bm{x},M)\right|_{m-t+1}^{R}$ by (\ref{ex_kos}).
Note that $\Omega^{i}(\mod R/J)\subseteq\left|\Omega^{i}(\mod R/I)\right|_{n}^{R}\subseteq\left|\c_{I}(R)\right|_{n}^{R}$ for all $i\ge t$.
Therefore,
\[
\Omega^{t}E_{m-t}\in\left|\bigoplus_{i=t}^{m}\Omega^{i}\mathrm{H}_{m-i}(\bm{x},M)\right|_{m-t+1}^{R}\subseteq\ext(\c_{I}(R)).
\]
This shows that $M\in\ext(\c_{I}(R))$.
\end{proof}
%%%%%%%%%%%%%%%%%%%%
\begin{rmk}\label{BHST4.1}
Combining Remark \ref{rmk_defofCR} with \cite[Theorem 4.1]{BHST}, one has
\[
\c_{\V(\m)}(R)=\left|\bigcup_{i=t}^{d}\Omega^{i}(\fl(R))\right|_{d-t+1}^{R}=\ext_{R}\left(\bigoplus_{i=t}^{d}\Omega^{i}k\oplus R\right),
\]
where $\fl(R)$ denotes the subcategory of $\mod R$ consisting of modules of finite length.
Hence, using this equality and an argument similar to the proof of Proposition \ref{extofmodI}, we obtain the result of Proposition \ref{extofCI}(2) in the case where $I=\m$.
\end{rmk}
%%%%%%%%%%%%%%%%%%%%
Based on Propositions \ref{extofmodI} and \ref{extofCI}, the following question arises naturally.
\begin{ques}
Let $R$ be a commutative noetherian ring, and $I$ an ideal of $R$. 
Does the following equality hold?
\[
\ext(\c_{I}(R))=\c_{\V(I)}(R).
\]
\end{ques}
%%%%%%%%%%%%%%%%%%%%
The notion of the cohomology annihilator of $R$ was introduced in \cite{Iyengar Takahashi 2016}. 
Here, we study the relationship between $\c_{I}(R)$ and the cohomology annihilator of $R$.
\begin{dfn}
Let $R$ be a commutative noetherian ring.
For a nonnegative integer $n$, we define the {\em $n$-th cohomology annihilator} of $R$, denoted by $\ca^{n}(R)$, to be
\begin{center}
$\ca^{n}(R)=
\displaystyle\bigcap_{m\ge n}
\left(
\displaystyle\bigcap_{M, N \in \mod R}
\ann_{R}\Ext_{R} ^{m}(M, N)
\right)$.
\end{center}
In addition, we define the {\em cohomology annihilator} of $R$ by
\begin{center}
$\ca(R)=
\displaystyle\bigcup_{n\ge 0} \ca^{n}(R)$.
\end{center}
Note that the ascending chain of cohomology annihilators
$0=\ca^{0}(R) \subseteq \ca^{1}(R) \subseteq \ca^{2}(R) \subseteq \cdots$ stabilizes.
\end{dfn}
%%%%%%%%%%%%%%%%%%%%
\begin{rmk}
Let $R$ be a commutative noetherian ring.
\begin{enumerate}[\rm(1)]
\item
The equality $\ca(R)=R$ holds if and only if $R$ is a regular ring with finite Krull dimension.
\item
For an integer $n\geq0$, one has 
\[
\ca^{n+1}(R)=\bigcap_{M, N \in \mod R}\ann_{R}\Ext_{R} ^{n+1}(M, N)=\sa_{R}(\Omega^{n}(\mod R)).
\]
\end{enumerate}
\end{rmk}
%%%%%%%%%%%%%%%%%%%%
The following lemma gives a necessary condition  for $\c_{I}(R)$ to be extension closed. 
\begin{lem}\label{neces}
Let $(R,\m,k)$ be a $d$-dimensional noetherian local ring, and $I$ an $\m$-primary ideal.
If the category $\c_{I}(R)$ is extension closed, then the ring $R$ has an isolated singularity.
\end{lem}
\begin{proof}
By Proposition \ref{extofCI}, one has $\c_{I}(R)=\c_{\V(I)}(R)=\c_{\V(\m)}(R)$.
Let $\p$ be a prime ideal of $R$ with $\p\neq\m$.
By Remark \ref{rmk_defofCR}, we have $\Omega^{d}(R/\p)\in\c(R)$.
Hence,
\[
\sa_{R}(\Omega^{d}(R/\p))\supseteq\sa_{R}(\c(R)).
\]
It follows from \cite[Proposition 2.4(2)]{Kimura} that
\[
\sqrt{\sa_{R}(\c(R))}=\sqrt{\sa_{R}(\c_{\V(\m)}(R))}.
\]
Combining this with the assumption, we obtain 
\[
\sqrt{\sa_{R}(\c(R))}=\sqrt{\sa_{R}(\c_{I}(R))}\supseteq\sqrt{I}=\m.
\]
Thus,
\[
\nf(\Omega^{d}(R/\p))=\V(\sa_{R}(\Omega^{d}(R/\p)))\subseteq\V(\sa_{R}(\c(R)))\subseteq\V(\m).
\]
This implies that $\p$ does not belong to $\nf(\Omega^{d}(R/\p))$, and hence $R_{\p}$ is regular.
\end{proof}
%%%%%%%%%%%%%%%%%%%%
The condition $\cm_{\m}(R)=\cm_{\V(\m)}(R)$ can be characterized in terms of the cohomology annihilator.
\begin{prop}
Let $(R,\m,k)$ be a Cohen--Macaulay local ring of dimension $d$.
Then the following are equivalent.
\begin{enumerate}[\rm(1)]
\item 
The equality $\cm_{\m}(R)=\cm(R)$ holds.
\item
The equality $\cm_{\m}(R)=\cm_{\V(\m)}(R)$ holds.
\item 
One has $\m\subseteq\ca^{d+1}(R)$.
\end{enumerate}
\end{prop}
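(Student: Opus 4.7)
The plan is to prove the cyclic chain $(1) \Rightarrow (3) \Rightarrow (2) \Rightarrow (1)$, augmented by the trivial $(1) \Rightarrow (2)$ which follows from the inclusions $\c_\m(R) \subseteq \c_{\V(\m)}(R) \subseteq \c(R)$ that sandwich the middle category. For $(1) \Rightarrow (3)$, any $L \in \mod R$ has $\Omega_R^t L \in \c(R) = \c_\m(R)$ by Remark \ref{rmk_defofCR}(3), so $\m \subseteq \sa_R(\Omega_R^t L)$, and intersecting over $L$ yields $\m \subseteq \ca^{t+1}(R)$. For $(3) \Rightarrow (2)$, the hypothesis forces each $\Omega_R^t L$ into $\c_\m(R)$, hence $\add_R(\Omega_R^t(\mod R)) \subseteq \c_\m(R)$; Lemma \ref{CRinclusion} then squeezes $\c_{\V(\m)}(R) \subseteq \add_R(\Omega_R^t(\mod R)) \subseteq \c_\m(R) \subseteq \c_{\V(\m)}(R)$, yielding the equality.

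The step $(3) \Rightarrow (1)$ rests on showing that (3) forces $R$ to have isolated singularity. For any $\p \neq \m$, prime avoidance provides $x \in \m \setminus \p \subseteq \ca^{t+1}(R)$; the annihilation $x \cdot \Ext_R^{\geq t+1}(R/\p, R/\p) = 0$ localizes at $\p$, where $x/1$ is a unit in $R_\p$, giving $\Ext_{R_\p}^{\geq t+1}(\kappa(\p), \kappa(\p)) = 0$, so $\pd_{R_\p} \kappa(\p) \leq t$ and $R_\p$ is regular. By Remark \ref{rmk_nfloc_stann}(3) this upgrades to $\c_{\V(\m)}(R) = \c(R)$, and combining with $(3) \Rightarrow (2)$ gives $\c_\m(R) = \c_{\V(\m)}(R) = \c(R)$, i.e.~(1).

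For $(2) \Rightarrow (1)$, I will first use (2) together with Lemma \ref{CRinclusion} to see that $\m$ annihilates $\Ext_R^{>t}(L,-)$ for every $L \in \fl(R)$, since $\Omega_R^t L$ then lies in $\c_{\V(\m)}(R) = \c_\m(R)$. I propagate this by noetherian induction on $\dim R/\p$ along a prime filtration of an arbitrary $M \in \mod R$: for $\p \neq \m$, I apply the long exact sequence of $\Ext$ to $0 \to R/\p \xrightarrow{x} R/\p \to R/(\p + xR) \to 0$ with $x \in \m \setminus \p$, invoking the inductive hypothesis on the prime factors of $R/(\p + xR)$, each of strictly smaller-dimensional support. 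Because the long exact sequence loses a power of $\m$ at each extension in a filtration, the induction only yields $\m^N \subseteq \ca^{t+1}(R)$ for some $N \geq 1$; but this already gives $\sqrt{\ca^{t+1}(R)} \supseteq \m$, which suffices to rerun the localization argument above and conclude $R_\p$ is regular for every $\p \neq \m$. Therefore $R$ has isolated singularity, $\c_{\V(\m)}(R) = \c(R)$, and combined with (2) this delivers (1). The main obstacle is precisely this propagation: the filtration argument a priori only yields $\m^N$ in place of $\m$, and the saving observation is that $\m^N \subseteq \ca^{t+1}(R)$ is already enough to extract isolated singularity and close the cycle.
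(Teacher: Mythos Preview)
Your implications $(1)\Rightarrow(3)$, $(1)\Rightarrow(2)$, $(3)\Rightarrow(2)$, and $(3)\Rightarrow(1)$ are all correct and cleanly argued. The genuine gap is in $(2)\Rightarrow(1)$: the noetherian induction step does not go through.

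Concretely, suppose by induction that $\m^{N'}\cdot\Ext_R^{>t}(R/(\p+xR),-)=0$. Feeding $0\to R/\p\xrightarrow{x}R/\p\to R/(\p+xR)\to 0$ into the long exact sequence only tells you that, for each $i>t$ and each $N\in\mod R$, the kernel and cokernel of multiplication by $x$ on $E_i:=\Ext_R^i(R/\p,N)$ are annihilated by $\m^{N'}$. From $\m^{N'}E_i\subseteq xE_i$ you can iterate to $\m^{kN'}E_i\subseteq x^k E_i$ for all $k$, but since $E_i$ is an $R/\p$-module on which $x$ acts as a nonzero element of the domain $R/\p$, there is no reason for any $x^kE_i$ to vanish. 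Indeed, if $R_\p$ happened to be singular one would have $\p\in\supp E_i$ for suitable $i$, so $E_i$ is not of finite length and no power of $\m$ kills it; the induction therefore cannot rule this out, which is exactly what you need it to do. The ``saving observation'' that $\m^N\subseteq\ca^{t+1}(R)$ would suffice is fine, but you never actually reach such an $N$.

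The paper closes this gap by a different route: it invokes \cite[Proposition~2.4(2)]{Kimura}, which gives the equality $\sqrt{\sa_R(\c(R))}=\sqrt{\sa_R(\c_{\V(\m)}(R))}$. Under hypothesis~(2) one has $\sa_R(\c_{\V(\m)}(R))=\sa_R(\c_\m(R))\supseteq\m$, so $\sqrt{\sa_R(\c(R))}\supseteq\m$. Then for any $\p\neq\m$ the module $\Omega_R^t(R/\p)\in\c(R)$ has nonfree locus contained in $\V(\sa_R(\c(R)))\subseteq\{\m\}$, whence $R_\p$ is regular. This external input replaces your induction and is the substantive content of the implication.
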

\begin{proof}
By Lemma \ref{neces}, the condition (2) implies that $R$ has an isolated singularity.
Combining this with Remark \ref{rmk_nfloc_stann} and \cite[Lemma 2.10]{Iyengar Takahashi 2016}, it suffices to prove the equivalence between conditions (1) and (3) under the assumption that $R$ has an isolated singularity.
Since $\Omega^{d}(\mod R)=\cm(R)$ by \cite[Theorem 3.8]{EG}, we have $\sa_{R}(\Omega^{d}(\mod R))=\sa_{R}(\cm(R))$.
Hence, condition (1) holds if and only if one has $\m\subseteq\sa_{R}(\cm(R))$, if and only if one has $\m\subseteq\sa_{R}(\Omega^{d}(\mod R))$, that is, if and only if condition (3) holds.
\end{proof}
%%%%%%%%%%%%%%%%%%%%
The following lemma describes a certain closure property of $\mod_{I}(R)$ with respect to short exact sequences.
\begin{lem}\label{lprojext}
Let $R$ be a commutative noetherian ring and $I$ an ideal of $R$.
Assume that there exists an exact sequence
\[
0\to P\to A\to B\to0
\]
in $\mod R$ with $P\in\add_{R}R$.
Then we have $\sa_{R}(B)\subseteq\sa_{R}(A)$.
In particular, if $B\in\mod_{I}(R)$, then $A\in\mod_{I}(R)$ as well. In other words, we have $\add R\ast\mod_{I}(R)\subseteq\mod_{I}(R)$.
\end{lem}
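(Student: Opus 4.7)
The plan is to prove the inclusion $\sa_R(B)\subseteq\sa_R(A)$, from which the ``in particular'' statement is immediate: if $I\subseteq\sa_R(B)$, then $I\subseteq\sa_R(A)$, hence $A\in\mod_{I}(R)$.

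For the main inclusion, I would use the Ext characterization of stable annihilators recalled in Remark \ref{rmk_nfloc_stann}(1)(a), namely $\sa_{R}(A)=\ann_{R}\Ext_{R}^{1}(A,\Omega_{R}A)=\ann_{R}\Ext_{R}^{>0}(A,\mod R)$, and similarly for $B$. The strategy is to apply $\Hom_{R}(-,N)$ to the given short exact sequence and exploit the vanishing of Ext on projectives.

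Concretely, for an arbitrary $N\in\mod R$, applying $\Hom_{R}(-,N)$ to $0\to P\to A\to B\to 0$ yields the long exact sequence
\[
\cdots\to\Ext_{R}^{i}(B,N)\to\Ext_{R}^{i}(A,N)\to\Ext_{R}^{i}(P,N)\to\Ext_{R}^{i+1}(B,N)\to\cdots.
\]
Since $P\in\add_{R}R$, we have $\Ext_{R}^{i}(P,N)=0$ for every $i\geq1$. Therefore, for each $i\geq 1$, the module $\Ext_{R}^{i}(A,N)$ is a quotient of $\Ext_{R}^{i}(B,N)$. In particular, any element of $\sa_{R}(B)=\ann_{R}\Ext_{R}^{>0}(B,\mod R)$ annihilates $\Ext_{R}^{i}(A,N)$ for all $i\geq1$ and all $N$, so $\sa_{R}(B)\subseteq\ann_{R}\Ext_{R}^{1}(A,\Omega_{R}A)=\sa_{R}(A)$, which is the desired inclusion.

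There is no real obstacle: the argument is essentially immediate once one invokes the Ext-description of $\sa_{R}$ together with the projectivity of $P$. An alternative, more hands-on route would be to lift the factorization $B\xrightarrow{f}Q\xrightarrow{g}B$ of $a\cdot 1_{B}$ through the surjection $A\twoheadrightarrow B$ (using projectivity of $Q$) to obtain $\tilde g\colon Q\to A$, then observe that $a\cdot 1_A-\tilde g f\pi$ lands in $P$, yielding a factorization of $a\cdot 1_{A}$ through $Q\oplus P$; but the Ext approach above is both shorter and avoids the explicit diagram chase.
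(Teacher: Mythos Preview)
Your proof is correct and follows essentially the same approach as the paper: apply $\Hom_R(-,N)$ to the short exact sequence, use $\Ext_R^{i}(P,N)=0$ for $i>0$ to obtain surjections $\Ext_R^{i}(B,N)\twoheadrightarrow\Ext_R^{i}(A,N)$, and conclude via the Ext description of the stable annihilator. The paper's argument is the same, just stated more tersely.
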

\begin{proof}
Let $M$ be a finitely generated $R$-module.
Applying the functor $\Hom_{R}(-,M)$ to the exact sequence, we obtain the epimorphism $\Ext_{R}^{i}(B,M)\to\Ext_{R}^{i}(A,M)$ for each $i>0$.
This implies that $\sa_{R}(B)\subseteq\sa_{R}(A)$.
The latter assertion follows from the former.
\end{proof}
%%%%%%%%%%%%%%%%%%%%
The following two lemmas will be used in the proof of the main theorem in the next section. The first lemma states that the first syzygy category of $\c_{\m}(R)$ is closed under direct summands.
\begin{lem}\label{omega_smd}
Let $R$ be a commutative noetherian ring and $I$ an ideal of $R$.
Let $\X$ be a subcategory of $\mod R$ that contains $\add R$ and is closed under direct summands and extensions.
Then the subcategory $\Omega(\X\cap\mod_{I}(R))$ is closed under direct summands.
In particular, if $(R,\m,k)$ is a noetherian local ring, then $\Omega(\c_{\m}(R))$ is closed under direct summands.
\end{lem}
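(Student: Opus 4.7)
The plan is to show that any direct summand $M$ of an object $L = M \oplus N \in \Omega_{R}(\X \cap \mod_{I}(R))$ again lies in $\Omega_{R}(\X \cap \mod_{I}(R))$ by building a suitable exact sequence with $M$ as its kernel. Fix a defining sequence $0 \to M \oplus N \to F \to Y \to 0$ with $F$ projective and $Y \in \X \cap \mod_{I}(R)$. By Lemma \ref{filt}, $\mod_{I}(R)$ is closed under syzygies and direct summands, so $M \oplus N$ — being a syzygy of $Y$ up to a projective summand — belongs to $\mod_{I}(R)$, and hence so do both $M$ and $N$. Composing the inclusions $M \hookrightarrow M \oplus N \hookrightarrow F$, I would set $G := F/M$; this produces two exact sequences $0 \to M \to F \to G \to 0$ and $0 \to N \to G \to Y \to 0$. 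The remaining task is to verify that $G \in \X \cap \mod_{I}(R)$.

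The technical heart of the argument is the following sublemma, a complement to Lemma \ref{lprojext}: \emph{if $0 \to L \to F \to G \to 0$ is exact with $F$ projective and $L \in \mod_{I}(R)$, then $G \in \mod_{I}(R)$.} Applying $\Hom_{R}(-,X)$, the isomorphism $\Ext_{R}^{i}(G,X) \cong \Ext_{R}^{i-1}(L,X)$ for $i \geq 2$ immediately yields annihilation by $I \subseteq \sa_{R}(L)$ in higher degrees. For $i = 1$, one has $\Ext_{R}^{1}(G,X) \cong \Hom_{R}(L,X)/\mathrm{im}\,\Hom_{R}(F,X)$; given $a \in I$, the factorization $a \cdot \mathrm{id}_{L} = \beta \circ \alpha$ through some projective $Q$ (available because $a \in \sa_{R}(L) = \ann_{R}\lEnd_{R}(L)$) lets me push out the sequence along $\alpha: L \to Q$ to get $0 \to Q \to E \to G \to 0$. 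Since $Q$ is projective, this pushout sequence splits, and a retraction of this splitting supplies a map $\tilde\alpha: F \to Q$ with $\tilde\alpha|_{L} = \alpha$. Then for any $f: L \to X$, the composite $(f\beta)\tilde\alpha: F \to X$ restricts to $af$ on $L$, so $a \cdot [f] = 0$ in $\Ext_{R}^{1}(G,X)$, yielding $I \subseteq \sa_{R}(G)$. Applying this sublemma to $0 \to M \to F \to G \to 0$ gives $G \in \mod_{I}(R)$.

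It remains to verify $G \in \X$, using the extension $0 \to N \to G \to Y \to 0$: since $\X$ is closed under extensions and $Y \in \X$, it suffices to know that $N \in \X$. In the "in particular" case with $\X = \c(R)$, this is immediate: $\c(R)$ is resolving, so $M \oplus N = \ker(F \to Y) \in \c(R)$ by closure under kernels of epimorphisms, whence $N \in \c(R)$, and then $G \in \c(R)$ by extension closure, giving $G \in \c_{\m}(R)$. For the general $\X$ of the statement, the same argument applies once $N \in \X$ is secured using the extension closure and $\add_{R} R \subseteq \X$; I expect this to be the main delicacy, since $\X$ need not be resolving a priori. Once $G \in \X \cap \mod_{I}(R)$ is in hand, the sequence $0 \to M \to F \to G \to 0$ exhibits $M$ as an element of $\Omega_{R}(\X \cap \mod_{I}(R))$, completing the proof.
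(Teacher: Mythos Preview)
Your proof contains two genuine gaps, one of which you flag yourself and one you do not.

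\textbf{The sublemma is false.} You claim that if $0 \to L \to F \to G \to 0$ is exact with $F$ projective and $L \in \mod_{I}(R)$, then $G \in \mod_{I}(R)$. This fails already when $L$ is free: take $R = k[\![x,y]\!]$, $I = (y)$, and the sequence $0 \to R \xrightarrow{\cdot x} R \to R/(x) \to 0$. Then $L = R \in \mod_{I}(R)$ trivially, but $\sa_{R}(R/(x)) = (x)$ does not contain $y$. The error in your argument is the sentence ``Since $Q$ is projective, this pushout sequence splits'': splitting of $0 \to Q \to E \to G \to 0$ would require $\Ext_{R}^{1}(G,Q)=0$, which projectivity of $Q$ does not provide (injectivity would). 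Equivalently, there is no reason the map $\alpha: L \to Q$ should extend to $F$; the obstruction lives in $\Ext_{R}^{1}(G,Q)$. What \emph{is} true is the opposite inclusion $\sa_{R}(G) \subseteq \sa_{R}(\Omega_{R} G)$, and this is all one can say.

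\textbf{The $N \in \X$ problem is not a mere delicacy.} You correctly observe that your sequence $0 \to N \to G \to Y \to 0$ only yields $G \in \X$ once you know $N \in \X$, and that $\X$ is not assumed closed under kernels of epimorphisms. No amount of massaging the hypotheses on $\X$ alone will produce $N \in \X$ from your single sequence.

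Both problems are solved simultaneously by constructing the \emph{second} quotient $H = F/N$ alongside your $G = F/M$ and observing (this is \cite[Lemma 3.1]{T2006}) that there is an exact sequence
\[
0 \to F \to G \oplus H \to Y \to 0.
\]
Now the left term is projective and the right term lies in $\X \cap \mod_{I}(R)$. Extension closure of $\X$ gives $G \oplus H \in \X$, and Lemma~\ref{lprojext} gives $G \oplus H \in \mod_{I}(R)$; direct summand closure of both then yields $G \in \X \cap \mod_{I}(R)$, and your sequence $0 \to M \to F \to G \to 0$ finishes the job. The point is that you must look at both halves of the decomposition at once: the diagonal embedding $F \to (F/M)\oplus(F/N)$ is what makes the argument run, and it cannot be seen from $G$ alone.
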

\begin{proof}
Let $A$ be a module in $\Omega(\X\cap\mod_{I}(R))$, and assume that $A\cong A_{1}\oplus A_{2}$.
Then there exists an exact sequence
\[
0\to A_{1}\oplus A_{2}\to P\to B\to0
\]
in $\mod R$, where $P\in\add R$ and $B\in\X\cap\mod_{I}(R)$.
By \cite[Lemma 3.1]{T2006} we obtain the following exact sequences:
\begin{equation}\label{omega_smd_f1}
0\to A_{i}\to P\to C_{i}\to0 \quad (i=1,2),
\end{equation}
\begin{equation}\label{omega_smd_f2}
0\to P\to C_{1}\oplus C_{2}\to B \to 0.
\end{equation}
Combining (\ref{omega_smd_f2}) with the assumption on $\X$, we see that $C_{1}$, $C_{2}\in\X$.
Moreover, applying Lemma \ref{lprojext} to (\ref{omega_smd_f2}), we obtain that $C_{1}$, $C_{2}\in\mod_{I}(R)$.
Hence, by (\ref{omega_smd_f1}), it follows that $A_{1}$, $A_{2}\in\Omega(\X\cap\mod_{I}(R))$.
\end{proof}
%%%%%%%%%%%%%%%%%%%%
We end this section by providing the following lemma.  
For a local ring $(R, \m, k)$, it follows from Lemma \ref{filt} that $\Omega^{t+1} k$ belongs to $\Omega(\c_{\m}(R))$, where $t = \depth R$.  
However, the syzygy degree of $k$ can be reduced from $t+1$ to $t$.
\begin{lem}\label{t+1_trf}
Let $(R,\m,k)$ be a noetherian local ring, and set $t=\depth R$.
Then the module $\Omega^{t}k$ belongs to $\Omega(\c_{\m}(R))$.
\end{lem}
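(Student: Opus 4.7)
The plan is to exhibit $\Omega_R^t k$ as (essentially) the first syzygy of a module in $\c_\m(R)$, and then invoke the closure of $\Omega_R(\c_\m(R))$ under direct summands from Lemma~\ref{omega_smd}. I focus on $t\geq 1$, since the case $t=0$ is immediate: pick any $0\neq s\in \soc R$ and observe that $k\cong \Omega_R(R/(s))$ with $R/(s)\in \mod_\m(R)=\c_\m(R)$. Set $M=\Omega_R^{t-1}k$, which lies in $\mod_\m(R)$ by Lemma~\ref{filt}. Because $\depth R=t$, the module $\Ext_R^t(k,R)$ is nonzero, and dimension shifting in the first argument of Ext (using that the low-degree modules in the minimal free resolution of $k$ are free) yields a canonical isomorphism $\Ext_R^1(M,R)\cong \Ext_R^t(k,R)$. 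I would choose a nonzero class $\xi\in \Ext_R^1(M,R)$ and take the corresponding short exact sequence
\[
0\to R\to X\to M\to 0.
\]
The goal is to verify $X\in \c_\m(R)$ and $\Omega_R X\cong \Omega_R^t k$ stably.

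The membership $X\in \mod_\m(R)$ is automatic: since $R$ is projective, the sequence realises $X\cong M$ in $\lmod R$, hence $\sa_R(X)=\sa_R(M)\supseteq \m$ by Lemma~\ref{filt}. For $X\in \c(R)$, at a prime $\p\neq \m$ one has $k_\p=0$, so the localised minimal free resolution of $k$ splits and $M_\p$ is a free $R_\p$-module; the localised short exact sequence then splits, making $X_\p$ free and the depth inequality immediate. At $\m$ I would examine the long exact sequence in local cohomology: iterated dimension shifting through the syzygy sequences produces the canonical identification $H_\m^{t-1}(M)\cong H_\m^0(k)=k$, so the desired bound $\depth X\geq t$ reduces to the injectivity of the connecting homomorphism $\delta:k\cong H_\m^{t-1}(M)\to H_\m^t(R)$. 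Under the natural edge isomorphism $\Ext_R^t(k,R)\cong \Hom_R(k,H_\m^t(R))=\soc H_\m^t(R)$—which is the collapse in total degree $t$ of the spectral sequence $\Ext_R^p(k,H_\m^q(R))\Rightarrow \Ext_R^{p+q}(k,R)$, obtained from the vanishing $H_\m^{<t}(R)=0$—naturality identifies $\delta$ with Yoneda composition by $\xi$. Since $\xi\neq 0$, the map $\delta$ is nonzero with simple source $k$, hence injective.

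Finally, applying the syzygy functor to the short exact sequence (horseshoe lemma, together with $\Omega_R R=0$ in $\lmod R$) gives the stable isomorphism $\Omega_R X\cong \Omega_R M=\Omega_R^t k$; concretely, $\Omega_R X\oplus R^a\cong \Omega_R^t k\oplus R^b$ for some $a,b\geq 0$. The left-hand side is the first syzygy of $X\in \c_\m(R)$ with respect to a (possibly non-minimal) free cover, hence it lies in $\Omega_R(\c_\m(R))$, and $\Omega_R^t k$ is a direct summand of it. By Lemma~\ref{omega_smd}, $\Omega_R(\c_\m(R))$ is closed under direct summands, so $\Omega_R^t k\in \Omega_R(\c_\m(R))$, as required.

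The main technical obstacle I anticipate is the identification of the connecting map $\delta$ with the class $\xi$ via the socle edge isomorphism $\Ext_R^t(k,R)\cong \soc H_\m^t(R)$; this follows from standard naturality of the derived-functor long exact sequence and the spectral sequence collapse, but has to be set up carefully to ensure that all the identifications are canonical, especially without a Cohen--Macaulay hypothesis.
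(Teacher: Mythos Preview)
Your argument is correct and takes a genuinely different route from the paper. The paper invokes the theorem that $\Omega_R^t k$ is $(t+1)$-torsionfree (Dey--Takahashi) and then uses Auslander--Bridger theory to write it, up to free summands, as $\Omega_R^{t+1}\Tr_R\Omega_R^{t+1}\Tr_R(\Omega_R^t k)=\Omega_R\bigl(\Omega_R^{t}\Tr_R\Omega_R^{t+1}\Tr_R\Omega_R^t k\bigr)$; the inner module lies in $\c_\m(R)$ by the closure properties of Lemma~\ref{filt}(5)(d), and one finishes with Lemma~\ref{omega_smd}. Your approach instead builds an explicit $X\in\c_\m(R)$ as a nonsplit extension of $\Omega_R^{t-1}k$ by $R$, which is more hands-on and avoids the torsionfree machinery and the external references. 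The trade-off is the depth verification at $\m$: the paper gets it for free from $\Omega_R^t(-)\subseteq\c(R)$, whereas you must argue that the connecting map is nonzero. Your identification via local cohomology and the spectral sequence edge map is valid, but you can streamline it considerably by working with $\Ext_R^\bullet(k,-)$ directly: the connecting homomorphism $\Ext_R^{t-1}(k,M)\to\Ext_R^t(k,R)$ is Yoneda composition with $\xi$, and since $\Ext_R^{t-1}(k,\Omega_R^{t-1}k)\cong\Hom_R(k,k)=k$ is generated by the class of the truncated minimal resolution, its image is precisely the image of $\xi$ under the dimension-shifting isomorphism $\Ext_R^1(M,R)\cong\Ext_R^t(k,R)$---hence nonzero. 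This bypasses the spectral sequence entirely and removes the ``technical obstacle'' you flagged.
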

\begin{proof}
By \cite[Theorem 4.1(2)]{DeyTakahashi2023}, the module $\Omega^{t}k$ is $(t+1)$-torsionfree.
Hence, by \cite[1.1.1]{Iyama}, it is stably isomorphic to
\begin{equation}\label{omega_t_k}
\Omega^{t+1}\Tr\Omega^{t+1}\Tr\left(\Omega^{t}k\right)=\Omega\left(\Omega^{t}\Tr\Omega^{t+1}\Tr\Omega^{t}k\right).
\end{equation}
By Lemma \ref{filt} and Remark \ref{rmk_nfloc_stann}(3), the module $\Omega^{t}\Tr\Omega^{t+1}\Tr\Omega^{t}k$ belongs to $\c_{\m}(R)$.
It follows that the module in (\ref{omega_t_k}) belongs to $\Omega(\c_{\m}(R))$.
Combining this with Lemma \ref{omega_smd}, the assertion follows.
\end{proof}
%%%%%%%%%%%%%%%%%%%%
%%%%%%%%%%%%%%%%%%%%
\section{Finite $\Omega(\operatorname{CM}_{\mathfrak{m}}(R))$-type}
In this section, we state and prove Theorem \ref{mainthm} and provide its applications.  
We begin with the following elementary lemma concerning faithfully flat extensions of the annihilators of $\Ext$ modules.
%%%%%%%%%%%%%%%%%%%%
\begin{lem}\label{lem_flat_ann}
Let $R\to S$ be a flat local homomorphism of local rings.
Then for all modules $M\in\mod R$ and integers $n\geq0$, the following equality holds:
\[
\ann_{R}\Ext_{R}^{n}(M,\Omega^{n}M)=\ann_{S}\Ext_{S}^{n}(M\otimes_{R}S,\Omega_{S}^{n}(M\otimes_{R}S))\cap R.
\]
In particular, we have $(\sa_{R}M)S\subseteq \sa_{S}(M\otimes_{R}S)$.
\end{lem}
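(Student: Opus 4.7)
The plan is to reduce to the standard flat base change formula for $\Ext$. First I would observe that, because $R \to S$ is a flat \emph{local} homomorphism, it is automatically faithfully flat (since $\m_R S \subseteq \m_S \neq S$), and in particular injective, so we may regard $R$ as a subring of $S$ and the notation $\cap R$ makes sense.

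Next I would prove that a minimal $R$-free resolution $F_\bullet \to M$ remains a minimal $S$-free resolution after applying $-\otimes_R S$. Flatness gives exactness, and the fact that $\phi(\m_R) \subseteq \m_S$ guarantees that the differentials still land inside $\m_S$ times the free module, so the tensored complex is minimal. Consequently $\Omega_S^n(M\otimes_R S) \cong \Omega_R^n M \otimes_R S$ for every $n\ge 0$. Combined with the standard flat base change isomorphism
\[
\Ext_R^n(M,N)\otimes_R S \;\cong\; \Ext_S^n(M\otimes_R S,\,N\otimes_R S),
\]
which holds because $M$ is finitely presented over the noetherian ring $R$ and $S$ is flat, this yields
\[
\Ext_S^n\bigl(M\otimes_R S,\ \Omega_S^n(M\otimes_R S)\bigr)\ \cong\ \Ext_R^n(M,\Omega_R^n M)\otimes_R S.
\]

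The remaining step is a general annihilator identity: for any finitely generated $R$-module $E$ and any faithfully flat $R$-algebra $S$,
\[
\ann_R E \;=\; \ann_S(E\otimes_R S)\cap R.
\]
The inclusion $\subseteq$ is immediate. For $\supseteq$, if $r\in R$ satisfies $\phi(r)(E\otimes_R S)=0$, then the $R$-linear map $E\xrightarrow{\,r\,} E$ becomes zero after applying $-\otimes_R S$; by faithful flatness the map is already zero, so $r\in\ann_R E$. Applying this identity to $E=\Ext_R^n(M,\Omega_R^n M)$ (which is finitely generated since $R$ is noetherian and $M$ admits a free resolution by finitely generated modules) delivers the displayed equality.

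Finally, the ``in particular'' clause follows by specialising $n=1$: by Remark \ref{rmk_nfloc_stann}(1)(a) one has $\sa_R(M)=\ann_R\Ext_R^1(M,\Omega_R M)$ and likewise over $S$, so the equality for $n=1$ forces $\sa_R(M)\subseteq\sa_S(M\otimes_R S)$ after extension along $\phi$, whence $(\sa_R(M))S\subseteq\sa_S(M\otimes_R S)$. The only real subtlety in the argument is the use of faithful (not merely flat) flatness to descend the vanishing of multiplication by $r$; everything else is bookkeeping with minimal resolutions and the classical base-change isomorphism.
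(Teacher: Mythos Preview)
Your proof is correct and follows essentially the same route as the paper: both establish $\Omega_S^n(M\otimes_R S)\cong\Omega_R^n M\otimes_R S$, invoke the flat base change isomorphism for $\Ext$, and then use the faithfully flat annihilator identity $\ann_R E=\ann_S(E\otimes_R S)\cap R$. You simply spell out the justifications (faithful flatness of a flat local map, preservation of minimality, the descent argument for annihilators) that the paper leaves implicit.
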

\begin{proof}
Note that the isomorphism $\Omega_{S}^{n}(M\otimes_{R}S)\cong\Omega^{n}(M)\otimes_{R}S$ holds.
Since $S$ is faithfully flat over $R$, we have
\[
\ann_{R}\Ext_{R}^{n}(M,\Omega^{n}M)=\ann_{S}\left(\Ext_{R}^{n}(M,\Omega^{n}M)\otimes_{R}S\right)\cap R.
\]
Combining this with the isomorphism $\Ext_{R}^{n}(M,\Omega^{n}M)\otimes_{R}S\cong\Ext_{S}^{n}(M\otimes_{R}S,\Omega_{S}^{n}(M\otimes_{R}S))$, the assertion follows.
\end{proof}
%%%%%%%%%%%%%%%%%%%%
The following lemma deals with the zero-dimensional case after factoring out a regular sequence in the proof of Theorem \ref{mainthm}.
\begin{lem}\label{lem_syz}
Let $(R,\m,k)$ be a noetherian local ring, and $I$ an ideal of $R$ such that $\m^{n}\subseteq I\subseteq\m$ for some positive integer $n$.
Then for every module $M$ in $\mod R/I$, we have $\Omega_{R/I}M\in\mod R/\m^{n-1}=\left|k\right|_{n-1}^{R}$.
Therefore, one has 
\[
\Omega_{R/I}(\mod R/I)\subseteq\left|k\oplus R/I\right|_{n-1}^{R}.
\]
\end{lem}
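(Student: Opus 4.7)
The plan is to set $S = R/I$ and observe that its maximal ideal $\n = \m S$ satisfies $\n^{n} = 0$. The lemma then splits into two essentially independent statements: (a) every first syzygy $\Omega_{S} M$ is annihilated by $\m^{n-1}$, and (b) the subcategory $\mod R/\m^{n-1}$ of $\mod R$ coincides with $\left|k\right|_{n-1}^{R}$. Combining (a) with (b) yields the primary inclusion $\Omega_{S} M \in \left|k\right|_{n-1}^{R}$, and the final display of the lemma will be a formal consequence obtained by adjoining free $S$-summands.

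Statement (a) is a one-line calculation: a minimal free presentation $S^{a} \twoheadrightarrow M$ over the local ring $S$ gives $\Omega_{S} M \subseteq \n \cdot S^{a}$, whence $\m^{n-1} \Omega_{S} M \subseteq \m^{n} \cdot S^{a} \subseteq I \cdot S^{a} = 0$. For (b) I would prove the two inclusions separately. The inclusion $\mod R/\m^{n-1} \subseteq \left|k\right|_{n-1}^{R}$ rests on the $\m$-adic filtration $0 = \m^{n-1} N \subseteq \m^{n-2} N \subseteq \cdots \subseteq \m N \subseteq N$, whose $n-1$ successive quotients are finitely generated $R/\m$-modules and therefore lie in $\add k$; unwinding the inductive definition of $\left|k\right|_{r}^{R}$ then places $N$ in $\left|k\right|_{n-1}^{R}$. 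The converse $\left|k\right|_{r}^{R} \subseteq \mod R/\m^{r}$ goes by induction on $r$: from $0 \to A \to N \to B \to 0$ with $A \in \left|k\right|_{r-1}^{R}$ and $B \in \add k$, induction gives $\m^{r-1} A = 0$ and $\m B = 0$, so $\m N \subseteq A$ and hence $\m^{r} N \subseteq \m^{r-1} A = 0$; since $\mod R/\m^{r}$ is stable under direct summands, the conclusion propagates to all of $\left|k\right|_{r}^{R}$.

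For the final inclusion $\Omega_{S}(\mod S) \subseteq \left|k \oplus R/I\right|_{n-1}^{R}$, I would invoke the description of a first syzygy category over a local ring recalled after Definition \ref{def_syz}: every object has the form $\Omega_{S} X \oplus S^{\oplus m}$ with $X \in \mod S$ and $m \geq 0$. The first summand lies in $\left|k\right|_{n-1}^{R} \subseteq \left|k \oplus R/I\right|_{n-1}^{R}$ by (a) and (b), the second in $\add(R/I) \subseteq \left|k \oplus R/I\right|_{n-1}^{R}$, and $\left|k \oplus R/I\right|_{n-1}^{R}$ is closed under finite direct sums by a routine induction on the subscript. The one subtle point worth flagging is that $\mod R/\m^{n-1}$ is not itself closed under arbitrary extensions inside $\mod R$, so the induction in (b) must track the $\m$-adic degree precisely rather than appeal to blind closure under extensions; beyond that I foresee no real obstacle.
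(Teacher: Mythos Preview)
Your argument is correct and matches the paper's. The paper's proof consists solely of your step (a)---the observation that $\Omega_{R/I}M\subseteq(\m/I)(R/I)^{\oplus s}$ forces $\m^{n-1}\cdot\Omega_{R/I}M=0$---and leaves the equality $\mod R/\m^{n-1}=\left|k\right|_{n-1}^{R}$ and the final displayed inclusion as understood; your justifications for (b) via the $\m$-adic filtration and for the last line via the description of $\Omega_S(\mod S)$ are sound supplements rather than a different approach.
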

\begin{proof}
Let $M$ be a finitely generated $R/I$-module.
Since the syzygy $\Omega_{R/I}M$ is taken from a minimal free resolution of $M$, it is contained in $(\m/I)(R/I)^{\oplus s}$ for some $s \geq 0$.
Hence, we have
\[
\m^{n-1} \cdot \Omega_{R/I}M \subseteq \m^{n-1} \cdot (\m/I)(R/I)^{\oplus s} = 0.
\]
\end{proof}
%%%%%%%%%%%%%%%%%%%%
We now state the main result of this section.
Let $(R,\m,k)$ be a noetherian local ring.
We denote by $\gll(R)$ the {\em generalized Loewy length} of $R$, defined as the infimum of the Loewy lengths of artinian rings obtained by factoring $R$ by a system of parameters.
That is,
\[
\gll(R)=\inf\{n\geq1\mid\text{there exists a system of parameters }\xx\text{ of }R \text{ such that }\m^{n}\subseteq(\xx)\}.
\]
Note that $\gll(R)=1$ if and only if $R$ is regular.
\begin{thm}\label{mainthm}
Let $(R,\m,k)$ be a singular Cohen--Macaulay local ring of dimension $d$ and $I$ an $\m$-primary ideal of $R$.
We set $r=\inf\{n\geq1\mid\text{there exists a system of parameters }\xx\text{ of }R\text{ such that }\m^{n}\subseteq(\xx)\subseteq I\}$.
\begin{enumerate}[\rm(1)]
\item
One has $\Omega(\cm_{I}(R))\subseteq\left|\Omega^{d}k\oplus R\right|_{r-1}^{R}$.
\item
We have $\Omega(\cm_{\m}(R))\subseteq\left|\Omega^{d}k\oplus R\right|_{\gll(R)-1}^{R}$.
Moreover, if $\gll(R)=2$ or $R$ has minimal multiplicity, then the equality $\Omega(\cm_{\m}(R))=\add(\Omega^{d}k\oplus R)$ holds.
\item
Assume that $R$ is Gorenstein.
Then the following hold.
\begin{enumerate}[\rm(a)]
\item
One has $\cm_{I}(R)\subseteq\left|\Omega^{d}k\oplus R\right|_{r-1}^{R}$.
\item
We have $\cm_{\m}(R)\subseteq\left|\Omega^{d}k\oplus R\right|_{\gll(R)-1}^{R}$.
Moreover, if $\gll(R)=2$ or $R$ has minimal multiplicity, then the equality $\cm_{\m}(R)=\add(\Omega^{d}k\oplus R)$ holds.
\end{enumerate}
\end{enumerate}
\end{thm}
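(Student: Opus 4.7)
The plan is to prove part (1) by a Koszul-type reduction applied to $\Omega_R M$ rather than to $M$ itself, and then to deduce (2) and (3) from (1).

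For (1), fix $M \in \cm_I(R)$ and choose an $R$-regular system of parameters $\bm{y}=y_1,\ldots,y_d$ with $\m^r \subseteq (\bm{y}) \subseteq I$. By Lemma~\ref{filt}(5)(a), $\Omega_R M \in \cm_I(R)$, so $(\bm{y}) \subseteq \sa_R(\Omega_R M)$; since $\bm{y}$ is $\Omega_R M$-regular ($\Omega_R M$ being maximal Cohen--Macaulay), the Koszul summand argument \cite[Corollary~3.2]{kos} (as used in the proof of Proposition~\ref{extofCI}(2)) places $\Omega_R M$ as a direct summand of $\Omega_R^d(\Omega_R M / \bm{y}\Omega_R M)$. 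Vanishing of $\Tor_{>0}^R(M, R/(\bm{y}))$ gives the identification $\Omega_R M / \bm{y}\Omega_R M \cong \Omega_{R/(\bm{y})}(M/\bm{y}M)$, and since $\m^r \subseteq (\bm{y})$, Lemma~\ref{lem_syz} puts this module in $|k|_{r-1}^R$. A standard iterated Horseshoe argument (turning each short exact sequence $0 \to A \to B \to C \to 0$ into $0 \to \Omega_R^d A \to \Omega_R^d B \oplus R^s \to \Omega_R^d C \to 0$; the accumulated free summands are what produce the $R$ on the right) shows that $\Omega_R^d$ sends $|k|_{r-1}^R$ into $|\Omega_R^d k \oplus R|_{r-1}^R$. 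Summand-closure of $|{-}|_{r-1}^R$ then forces $\Omega_R M \in |\Omega_R^d k \oplus R|_{r-1}^R$.

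Part (2) follows by taking $I=\m$ and choosing $\bm{y}$ to realize $\gll(R)$. When $\gll(R)\leq 2$ the bound collapses to $\add_R(\Omega_R^d k \oplus R)$; the reverse inclusion uses Lemma~\ref{t+1_trf} (for $\Omega_R^d k \in \Omega_R(\cm_{\m}(R))$), the sequence $0 \to R \to R^{\oplus 2} \to R \to 0$ (for $R \in \Omega_R(\cm_{\m}(R))$), and Lemma~\ref{omega_smd} (for closure under summands). The minimal-multiplicity case reduces to $\gll(R) \leq 2$ by passing to $R[x]_{\m R[x]}$ to arrange an infinite residue field and descending via faithful flatness and Krull--Schmidt, with Lemma~\ref{lem_flat_ann} transferring the stable-annihilator condition. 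For (3), the Gorenstein hypothesis makes $\cm(R)$ a Frobenius category, so each $M \in \cm_I(R)$ admits a cosyzygy $\Omega_R^{-1}M \in \cm(R)$; since $\sa_R(\Omega_R^{-1}M)=\sa_R(M)$ (as noted in the proof of Proposition~\ref{extofCI}(1)), one has $\Omega_R^{-1}M \in \cm_I(R)$, and applying (1) to it gives $M \in |\Omega_R^d k \oplus R|_{r-1}^R$ after absorbing free summands. Equality in (3)(b) then follows as in (2), using $R \in \cm_{\m}(R)$ and $\Omega_R^d k \in \cm_{\m}(R)$ (the latter because $\sa_R(k)=\m \subseteq \sa_R(\Omega_R^d k)$ by Remark~\ref{rmk_nfloc_stann}(1)(h)).

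The main technical obstacle is obtaining the sharper exponent $r-1$ rather than $r$: a Koszul argument applied to $M$ itself would yield only $M/\bm{y}M \in |k|_r^R$ (since $\m^r(M/\bm{y}M)=0$ but no smaller power need annihilate it), and hence the weaker bound $|\Omega_R^d k \oplus R|_r^R$, which would render the equalities in (2) and (3) vacuous except when $R$ is regular. The one-syzygy shift to $\Omega_R M$ is essential: it lets one apply Lemma~\ref{lem_syz}'s sharper bound $n-1$ on $\Omega_{R/I}$ and so gain the crucial unit saving that makes Theorem~\ref{mainthm} nontrivial.
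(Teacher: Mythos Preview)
Your proof is correct and follows essentially the same route as the paper. The only cosmetic difference in part (1) is that you apply the Koszul-summand argument directly to $\Omega_R M$ and identify $\Omega_R M/\bm{y}\,\Omega_R M \cong \Omega_{R/(\bm{y})}(M/\bm{y}M)$ via $\Tor$-vanishing, whereas the paper applies the Koszul argument to $M$, takes one more syzygy, and then cites \cite[Lemma~4.2]{Nasseh Takahashi} for the equivalent identification $\Omega_R^{d+1}(M/\bm{x}M)\oplus R^{\oplus a}\cong \Omega_R^{d}\Omega_{R/(\bm{x})}(M/\bm{x}M)$; the arguments for (2) and (3) match the paper's.
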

\begin{proof}
(1) Let $\bm{x}=x_{1},\ldots,x_{d}$ be a system of parameters of $R$ such that $\m^{r}\subseteq(\bm{x})\subseteq I$, and $M$ a module in $\cm_{I}(R)$.
Since $M$ is a maximal Cohen--Macaulay $R$-module, the sequence $\bm{x}$ is an $M$-sequence.
Moreover, since $\bm{x}\subseteq I\subseteq\sa_{R}(M)$, it follows from \cite[Corollary 3.2]{kos} that $M$ is a direct summand of $\Omega^{d}(M/\bm{x}M)$.
On the other hand, by Lemma \ref{lem_syz}, we have $\Omega_{R/(\bm{x})}(M/\bm{x}M)\in\left|k\oplus R/(\bm{x})\right|_{r-1}^{R}$.
By \cite[Lemma 4.2]{Nasseh Takahashi}, we obtain an isomorphism
\[
\Omega^{d+1}(M/\bm{x}M)\oplus R^{\oplus a}\cong\Omega^{d}\Omega_{R/(\bm{x})}(M/\bm{x}M)
\] 
for some $a\geq0$.
Combining the above, we conclude that $\Omega M\in\Omega^{d}\left(\left|k\oplus R/(\bm{x})\right|_{r-1}^{R}\right)\subseteq\left|\Omega^{d}k\oplus R\right|_{r-1}^{R}$.

(2) The first assertion follows from (1) by taking $I=\m$ in the definition of $r$.
Combining Lemmas \ref{filt}, \ref{omega_smd} and \ref{t+1_trf}, we see that the category $\Omega(\cm_{\m}(R))$ is closed under finite direct sums and direct summands, and contains $\Omega^{d}k$. 
This implies that 
\begin{equation}\label{mainthm_f1}
\Omega(\cm_{\m}(R))\supseteq\add(\Omega^{d}k\oplus R).
\end{equation}
We will prove the reverse inclusion in (\ref{mainthm_f1}) under each assumption.
Assume that $\gll(R)=2$. 
Then the inclusion 
\[
\Omega(\cm_{\m}(R))\subseteq\add(\Omega^{d}k\oplus R)
\]
follows from the previous result.
Now assume that $R$ has minimal multiplicity.
Let $S=R[x]_{\m R[x]}$ and consider the faithfully flat extension $R\to S$.
Note that $S=(S,\m S)$ is a $d$-dimensional Cohen--Macaulay local ring with minimal multiplicity and infinite residue field.
Hence, $\gll(S)=2$ by \cite[Exercise 4.6.14(c)]{BH}.
Let us consider the following subcategory of $\mod S$:
\[
\X=\{M\otimes_{R}S\mid M\in\cm_{\m}(R)\}.
\]
By Lemma \ref{lem_flat_ann}, we have $\X\subseteq\cm_{\m S}(S)$.
Then, applying the previous result to $S$, we obtain
\[
\Omega_{S}(\cm_{\m S}(S))\subseteq\add_{S}\left(\Omega_{S}^{d}(S/\m S)\oplus S\right)=\add_{S}\left(\left(\Omega^{d}k\oplus R\right)\otimes_{R}S\right).
\]
Note that $\{A\otimes_{R}S\mid A\in\Omega(\cm_{\m}(R))\}=\Omega_{S}\X\subseteq\Omega_{S}(\cm_{\m S}(S))$.
Hence, 
\[
\{A\otimes_{R}S\mid A\in\Omega(\cm_{\m}(R))\}\subseteq\add_{S}\left(\left(\Omega^{d}k\oplus R\right)\otimes_{R}S\right).
\]
Combining this with \cite[Proposition 2.18]{LW}, we conclude that
\[
\Omega(\cm_{\m}(R))\subseteq\add_{R}(\Omega^{d}k\oplus R).
\]

(3) It suffices to show that $\Omega(\cm_{I}(R))=\cm_{I}(R)$ for all ideals $I$ of $R$.
Let $M$ be a module in $\cm_{I}(R)$.
Since $R$ is Gorenstein, the equality $\cm(R)=\Omega\cm(R)$ holds.
Hence, there exists $N\in\cm(R)$ such that $M=\Omega N\oplus R^{\oplus a}$ for some $a\geq0$.
It follows that $\lEnd_{R}(M)=\lEnd_{R}(\Omega N)\cong\lEnd_{R}(N)$.
Therefore, we have $I\subseteq\sa_{R}(M)=\sa_{R}(N)$, and thus $N\in\cm_{I}(R)$.
This shows that $M\in\Omega(\cm_{I}(R))$.
\end{proof}
%%%%%%%%%%%%%%%%%%%%
\begin{rmk}
Let $(R,\m,k)$ be a Cohen--Macaulay local ring of dimension $d$.
\begin{enumerate}[\rm(1)]
\item
Note that if $k$ is infinite and $R$ has minimal multiplicity, then one has $\gll(R)\leq2$ by \cite[Exercise 4.6.14(c)]{BH}.
This fact was also used in the proof of Theorem \ref{mainthm}(2).
\item
Let $I$ be an $\m$-primary ideal of $R$, and let $\bm{x}=x_{1},\ldots,x_{d}$ be a system of parameters of $R$ satisfying $\m^{r}\subseteq(\bm{x})\subseteq I$.
For an integer $n>0$, set $l=d(n-1)+1$.
Then
\[
\m^{rl}\subseteq(\bm{x})^{l}\subseteq(\bm{x}^{n})\subseteq I^{n}.
\]
Hence, by Theorem \ref{mainthm}(1), we obtain
\[
\Omega(\cm_{I^{n}}(R))\subseteq\left|\Omega^{d}k\oplus R\right|_{rl-1}^{R}.
\]
In particular, taking $I=\m$ yields
\begin{equation}\label{mainthm_rmk_f1}
\Omega(\cm_{\m^{n}}(R))\subseteq\left|\Omega^{d}k\oplus R\right|_{\gll(R)(d(n-1)+1)-1}^{R}.
\end{equation}
On the other hand, by taking the union over $n$ in (\ref{mainthm_rmk_f1}), we obtain
\[
\Omega(\cm_{\V(\m)}(R))\subseteq\ext(\Omega^{d}k\oplus R).
\]
However, by \cite[Corollary 2.6]{stcm}, the equality $\cm_{\V(\m)}(R)=\ext(\Omega^{d}k\oplus R)$ holds.
Hence, the inclusion above turns out to be a trivial consequence.
\item
By the proof of Theorem \ref{mainthm}(1), one has
\[
\cm_{I}(R)\subseteq\Omega^{d}(\mod R/(\bm{x}))\subseteq\Omega^{d}(\mod R/\m^{r})\subseteq\left|\Omega^{d}k\oplus R\right|_{r}^{R}.
\]
The important point here is that taking a syzygy reduces the generation time by one.
\end{enumerate}
\end{rmk}
%%%%%%%%%%%%%%%%%%%%
Recall that a subcategory $\X$ of $\mod R$ is {\em of finite type} if the number of isomorphism classes of indecomposable modules in $\X$ is finite.
Note that if $R$ is local and $\X$ is closed under finite direct sums and direct summands, then $\X$ is of finite type if and only if $\X=\add M$ for some $M\in\X$ (see \cite[Theorem 2.2]{LW} for instance).
The following result is an application of Theorem \ref{mainthm}, and it provides a sufficient condition for the syzygy category of maximal Cohen--Macaulay modules to be of finite type.
\begin{cor}\label{cor_fin_ocm}
Let $(R,\m,k)$ be a singular Cohen--Macaulay local ring of dimension $d$.
Assume that $\m\subseteq\ca^{d+1}(R)$, and that either $\gll(R)=2$ or $R$ has minimal multiplicity.
Then one has $\operatorname{\Omega CM}(R)=\add(\Omega^{d}k\oplus R)$.
In particular, the category $\operatorname{\Omega CM}(R)$ is of finite type.
\end{cor}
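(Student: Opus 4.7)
The plan is to combine the two main results established in the preceding sections: the characterization of the equality $\cm_{\m}(R)=\cm(R)$ via the cohomology annihilator, and the structure theorem for $\Omega_{R}(\cm_{\m}(R))$ proved as Theorem \ref{mainthm}(2).

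First, I would invoke the corollary immediately following Proposition \ref{c_m=c}, which states that for a Cohen--Macaulay local ring of dimension $d$, the equality $\cm_{\m}(R)=\cm(R)$ is equivalent to the containment $\m\subseteq\ca^{d+1}(R)$. Thus, the hypothesis on the cohomology annihilator in the corollary yields $\cm_{\m}(R)=\cm(R)$, and consequently $\Omega_{R}\cm(R)=\Omega_{R}\cm_{\m}(R)$.

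Next, under either of the two alternative hypotheses ($\gll(R)\leq 2$ or $R$ has minimal multiplicity), Theorem \ref{mainthm}(2) provides exactly the equality
\[
\Omega_{R}(\cm_{\m}(R))=\add_{R}(\Omega_{R}^{d}k\oplus R).
\]
Chaining this with the previous step gives $\Omega_{R}\cm(R)=\add_{R}(\Omega_{R}^{d}k\oplus R)$.

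Finally, the additive closure of a single finitely generated module over a local ring has only finitely many isomorphism classes of indecomposable summands (by Krull--Schmidt, as recalled in the discussion preceding the corollary), so $\Omega_{R}\cm(R)$ is of finite type. There is no real obstacle here; the work has all been done in Theorem \ref{mainthm} and Proposition \ref{c_m=c}, and the corollary is a direct application combining the two.
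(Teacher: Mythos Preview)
Your proposal is correct and matches the paper's intended argument: the corollary is stated without proof in the paper precisely because it is the direct combination of the Cohen--Macaulay specialization of Proposition \ref{c_m=c} (yielding $\cm_{\m}(R)=\cm(R)$) with Theorem \ref{mainthm}(2). The finite-type conclusion then follows from Krull--Schmidt as you note.
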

%%%%%%%%%%%%%%%%%%%%
We give an example as an application of Corollary \ref{cor_fin_ocm}.
\begin{eg}\label{numring}
Let $e\ge2$, and consider the numerical semigroup ring 
\[
R=k[\![t^{e},t^{e+1},\ldots,t^{2e-1}]\!].
\]
This ring has also been studied in \cite[Example 2.11]{Kobayashi2017} and \cite[Example 3.4]{Esenetepe}.
Note that $R$ is a one-dimensional complete noetherian local domain with minimal multiplicity, and its conductor $C_{R}$ coincides with the maximal ideal $\m$ of $R$.
By \cite[Corollary 3.2]{Wang}, $C_{R}\subseteq\ca^{2}(R)$.
Hence, by Corollary \ref{cor_fin_ocm}, the category $\operatorname{\Omega CM}(R)$ is of finite type.
Finally, note that $R$ is Gorenstein if and only if $e\leq2$.
\end{eg}
%%%%%%%%%%%%%%%%%%%%
We next provide an example in which $\Omega(\cm_{\m}(R))$ is of finite type, while $\operatorname{\Omega CM}(R)$ is not.
Before proceeding, we remark that if $\operatorname{\Omega CM}(R)$ is of finite type, then $R$ must have an isolated singularity.
This is established in \cite[Proposition 4.10]{DKLO} and also follows from \cite[Theorem 3.8]{M3} and \cite[Theorem 2.2]{LW}.
\begin{eg}\label{formalpowerext}
Let $(S,\n)$ be a singular Cohen--Macaulay local ring with minimal multiplicity.
Consider the formal power series ring $R=S[\![x]\!]$.
Then $R=(R,\m)$ is again a singular Cohen--Macaulay local ring with minimal multiplicity.
Hence, by Theorem \ref{mainthm}(2), the category $\Omega(\cm_{\m}(R))$ is of finite type.
However, since $R$ does not have an isolated singularity, the category $\operatorname{\Omega CM}(R)$ is not of finite type.
\end{eg}
%%%%%%%%%%%%%%%%%%%%
The following corollary states that if the cohomology annihilator of a Cohen--Macaulay local ring $R$ coincides with its maximal ideal, then the module $\Omega^{d}k\oplus R$ serves as a strong generator for $\mod R$ with generation time at most $\gll(R)-1$ (cf. \cite[Theorem~1.1]{DKLO}). For the notion of a strong generator in the module category, see \cite[4.3]{Iyengar Takahashi 2016}.
\begin{cor}\label{fin_syz_type}
Let $(R,\m,k)$ be a Cohen--Macaulay local ring of dimension $d$.
Assume that $\ca(R)=\ca^{s+1}(R)=\m$ for some $s\geq0$.
Then the following statements hold.
\begin{enumerate}[\rm(1)]
\item
One has $s\geq d$, and $\Omega^{s}(\mod R)\subseteq\cm_{\m}(R)$.
\item
One has $\Omega^{s+1}(\mod R)\subseteq\left|\Omega^{d}k\oplus R\right|_{\gll(R)-1}^{R}$.
\item
If $\gll(R)=2$ or $R$ has minimal multiplicity, then we have $\Omega^{s+1}(\mod R)\subseteq\add(\Omega^{d}k\oplus R)$.
\end{enumerate}
\end{cor}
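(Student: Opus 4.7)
The plan is to establish (1) first; then (2) and (3) follow quickly by applying Theorem~\ref{mainthm}(2) to the chain
\[
\Omega_{R}^{s+1}(\mod R)\subseteq\Omega_{R}(\Omega_{R}^{s}(\mod R))\subseteq\Omega_{R}(\cm_{\m}(R)).
\]
For (1), the starting observation is that the identity $\ca^{s+1}(R)=\sa_{R}(\Omega_{R}^{s}(\mod R))$ recorded in the remark following the definition of $\ca$, together with the hypothesis $\ca^{s+1}(R)=\m$, immediately gives $\Omega_{R}^{s}(\mod R)\subseteq\mod_{\m}(R)$. Once we know $s\geq d$, Remark~\ref{rmk_defofCR}(3) yields $\Omega_{R}^{s}(\mod R)\subseteq\Omega_{R}^{d}(\mod R)\subseteq\cm(R)$, and intersecting the two inclusions completes (1).

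The substantive content is therefore the lower bound $s\geq d$. Since $\ca(R)=\m\neq R$, $R$ is not regular. For any $n\leq d$, pick a regular sequence $\bm{x}=x_{1},\ldots,x_{n}$ of $R$ (which exists because $\depth R=d$). The Koszul resolution of $R/(\bm{x})$ together with Koszul self-duality gives $\Ext_{R}^{n}(R/(\bm{x}),R)\cong R/(\bm{x})$, and hence
\[
\ca^{n}(R)\subseteq\ann_{R}\Ext_{R}^{n}(R/(\bm{x}),R)=(\bm{x}).
\]
For $n<d$ the ideal $(\bm{x})$ has height $n<d$ and is therefore strictly contained in $\m$; for $n=d$ the ideal $(\bm{x})$ is $\m$-primary, but $(\bm{x})=\m$ would force $R$ to be regular. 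In either case $\ca^{n}(R)\subsetneq\m$ for every $n\leq d$, so the hypothesis $\ca^{s+1}(R)=\m$ forces $s+1\geq d+1$, i.e.\ $s\geq d$.

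For (2), the displayed chain, combined with (1) and Theorem~\ref{mainthm}(2), yields
\[
\Omega_{R}^{s+1}(\mod R)\subseteq\Omega_{R}(\cm_{\m}(R))\subseteq\left|\Omega_{R}^{d}k\oplus R\right|_{\gll(R)-1}^{R};
\]
the first inclusion uses that any sequence $0\to M\to F_{s}\to\cdots\to F_{0}\to X\to 0$ exhibiting $M\in\Omega_{R}^{s+1}(\mod R)$ splits as $0\to M\to F_{s}\to Y\to 0$ with $Y\in\Omega_{R}^{s}(\mod R)\subseteq\cm_{\m}(R)$. For (3), the extra hypothesis upgrades the inclusion in Theorem~\ref{mainthm}(2) to the equality $\Omega_{R}(\cm_{\m}(R))=\add_{R}(\Omega_{R}^{d}k\oplus R)$, which gives the desired conclusion.

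The main obstacle is the Koszul bound $\ca^{n}(R)\subseteq(\bm{x})$ for $n\leq d$ and its strengthening to a strict inclusion in $\m$; the boundary case $n=d$ crucially exploits the non-regularity of $R$, which is supplied for free by $\ca(R)\neq R$.
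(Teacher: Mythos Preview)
Your proof is correct and follows the same overall architecture as the paper: establish (1), then feed it into Theorem~\ref{mainthm}(2) to obtain (2) and (3). The only substantive difference is how you obtain the bound $s\geq d$. The paper invokes \cite[Proposition~2.4]{Iyengar Takahashi 2016} to get the stronger fact that $\ca^{l}(R)=0$ for all $l\leq d$, and then observes that $\ca^{s+1}(R)=\m\neq 0$ (using that $R$ is singular, hence not a field). You instead give a direct, self-contained Koszul computation: for any regular sequence $\bm{x}$ of length $n\leq d$ one has $\ca^{n}(R)\subseteq\ann_{R}\Ext_{R}^{n}(R/(\bm{x}),R)=(\bm{x})\subsetneq\m$, with the strict inclusion for $n=d$ coming from non-regularity. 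Your argument is more elementary and avoids the external citation, at the price of proving only $\ca^{n}(R)\subsetneq\m$ rather than $\ca^{n}(R)=0$; but the weaker statement is exactly what is needed here. The remainder of your proof, including the chain $\Omega_{R}^{s+1}(\mod R)\subseteq\Omega_{R}(\cm_{\m}(R))$ and the appeal to Theorem~\ref{mainthm}(2), matches the paper verbatim.
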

\begin{proof}
(1) Since $\ca(R)=\m\neq R$, the ring $R$ is singular.
In particular, $\m\neq0$, and hence $\ca(R)\neq0$.
By \cite[Proposition 2.4]{Iyengar Takahashi 2016}, we have $\ca^{l}(R)=0$ for all $l\leq d$.
Therefore, $s\geq d$.
It follows that
\[
\Omega^{s}(\mod R)\subseteq\Omega^{d}(\mod R)\subseteq\cm(R).
\]
In addition, since $\m=\ca^{s+1}(R)=\sa_{R}(\Omega^{s}(\mod R))$, we have $\Omega^{s}(\mod R)\subseteq\mod_{\m}(R)$.
Thus, we obtain that
\[
\Omega^{s}(\mod R)\subseteq\cm(R)\cap\mod_{\m}(R)=\cm_{\m}(R).
\]

The remaining two assertions follow from (1) together with Theorem \ref{mainthm}(2).
\end{proof}
%%%%%%%%%%%%%%%%%%%%
\begin{rmk}
Let $(R, \m, k)$ be a noetherian local ring.
Assume that $\ca(R)$ is an $\m$-primary ideal.
Then there exist integers $n > 0$ and $s \geq \dim R$ such that
\[
\m^{n} \subseteq \ca(R) = \ca^{s+1}(R).
\]
Hence, we have
\[
\Omega^{s}(\mod R) \subseteq \c_{\m^{n}}(R).
\]
\end{rmk}
%%%%%%%%%%%%%%%%%%%%
In the above corollary, when $R$ is Gorenstein, we recover \cite[Theorem 5.5]{DeyLiu}, which asserts that a Gorenstein local ring with minimal multiplicity and with cohomology annihilator equal to the maximal ideal of $R$ is of finite Cohen--Macaulay representation type.
\begin{cor}\label{DeyLiu5.6}
Let $(R,\m,k)$ be a Gorenstein local ring of dimension $d$.
Assume that $\ca(R)=\m$.
Then the following hold.
\begin{enumerate}[\rm(1)]
\item
The equality $\cm(R)=\left|\Omega^{d}k\oplus R\right|_{\gll(R)-1}^{R}$ holds.
\item
Assume that either $\gll(R)=2$ or $R$ has minimal multiplicity. 
Then the equality $\cm(R)=\add(\Omega^{d}k\oplus R)$ holds.
\end{enumerate}
\end{cor}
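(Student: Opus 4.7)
The plan is to reduce the corollary to Theorem \ref{mainthm}(3) by first upgrading the hypothesis $\ca(R)=\m$, in the Gorenstein setting, to the identification $\cm(R)=\cm_{\m}(R)$. Once that equality is in hand, both parts become immediate from the two statements in Theorem \ref{mainthm}(3)(b) applied with $I=\m$.

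For the identification, the stabilization of the chain $\ca^{1}(R)\subseteq\ca^{2}(R)\subseteq\cdots$ produces an $s\geq 0$ with $\ca^{s+1}(R)=\ca(R)=\m$, and Corollary \ref{fin_syz_type}(1), whose proof uses exactly this stabilization, yields $s\geq d$ together with $\Omega_{R}^{s}(\mod R)\subseteq\cm_{\m}(R)$. Since $R$ is Gorenstein, every maximal Cohen--Macaulay module is, up to free summands, of the form $\Omega_{R}^{s}N$ for some $N\in\cm(R)\subseteq\mod R$. Consequently
\[
\cm(R)\subseteq\add_{R}(\Omega_{R}^{s}(\mod R)\cup\{R\})\subseteq\cm_{\m}(R),
\]
and since the reverse inclusion is trivial we obtain $\cm(R)=\cm_{\m}(R)$.

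Now Theorem \ref{mainthm}(3)(b) with $I=\m$ gives $\cm_{\m}(R)\subseteq\left|\Omega_{R}^{d}k\oplus R\right|_{\gll(R)-1}^{R}$, which sharpens to the equality $\cm_{\m}(R)=\add_{R}(\Omega_{R}^{d}k\oplus R)$ when $\gll(R)\leq 2$ or $R$ has minimal multiplicity. Combining with $\cm(R)=\cm_{\m}(R)$ yields (2) at once. For (1), the forward inclusion is the same statement, while the reverse inclusion holds because $\cm(R)$ is resolving (Remark \ref{rmk_defofCR}) and contains both $R$ and $\Omega_{R}^{d}k$, hence every module built from them by finitely many extensions and direct summands. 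The only real obstacle is the first step: converting the cohomology-annihilator hypothesis into the identification $\cm_{\m}(R)=\cm(R)$, where the Gorenstein assumption is essential since it alone allows arbitrary maximal Cohen--Macaulay modules to be realized as sufficiently high syzygies.
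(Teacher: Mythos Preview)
Your argument is correct. The overall strategy matches the paper's---both reduce to Theorem \ref{mainthm} after converting the hypothesis $\ca(R)=\m$ into a statement about syzygies---but the execution differs in one respect worth noting. The paper invokes \cite[Proposition 3.4]{DeyLiu} to obtain $\ca(R)=\ca^{d+1}(R)$ directly, so that $s=d$, and then combines the Gorenstein identity $\Omega_{R}^{d+1}(\mod R)=\cm(R)$ with Corollary \ref{fin_syz_type}(2),(3). You instead work with an arbitrary $s$ coming from stabilization, apply Corollary \ref{fin_syz_type}(1), and use the Gorenstein cosyzygy equivalence to deduce $\cm(R)=\cm_{\m}(R)$, finishing via Theorem \ref{mainthm}(3)(b). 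Your route is slightly longer but self-contained within the paper, avoiding the external citation to \cite{DeyLiu}; the paper's route is terser but imports that result. Both handle the reverse inclusion in (1) the same way (resolving property of $\cm(R)$), though the paper leaves it implicit.
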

\begin{proof}
Since $R$ is Gorenstein, it follows from \cite[Proposition 3.4]{DeyLiu} that $\ca(R)=\ca^{d+1}(R)$.
In addition, we have $\Omega^{d+1}(\mod R)=\cm(R)$.
Therefore, the assertions follow from Corollary \ref{fin_syz_type}.
\end{proof}
%%%%%%%%%%%%%%%%%%%%
%%%%%%%%%%%%%%%%%%%%
\begin{ac}
The author would like to thank his supervisor Ryo Takahashi for giving many thoughtful questions and helpful discussions. The author also thanks Yuya Otake and Kaito Kimura for giving valuable comments, and Jian Liu for sharing the paper \cite{DeyLiu}.
\end{ac}
%%%%%%%%%%%%%%%%%%%%

\end{document}